\newtheorem{thm}{Theorem}
\newtheorem*{thm*}{Theorem}
\newtheorem{lem}{Lemma}
\newtheorem{con}{Conjecture}
\newtheorem{fcon}[con]{(False) Conjecture}
\newtheorem{cor}[thm]{Corollary}
\theoremstyle{definition}
\newtheorem{defn}{Definition}
\newtheorem{prob}{Problem}
\theoremstyle{remark}
\newtheorem*{rem}{Remark}
\newcommand{\pt}{\mathrm{pt}}
\renewcommand{\epsilon}{\varepsilon}
\begin{document}

\title[Dvoretzky type theorems for multivariate polynomials\dots]{Dvoretzky type theorems for multivariate polynomials and sections of convex bodies}

\author{V.L.~Dol'nikov}
\thanks{The research of V.L.~Dol'nikov is supported by the Russian Foundation for Basic Research grant 10-01-00096.}
\email{dolnikov@uniyar.ac.ru}

\address{
Vladimir Dol'nikov, Department of Algebra, Yaroslavl' State University, Sovetskaya st. 14, Yaroslavl', Russia 150000}

\author{R.N.~Karasev}
\thanks{The research of R.N.~Karasev is supported by the Dynasty Foundation, the President's of Russian Federation grant MK-113.2010.1, the Russian Foundation for Basic Research grants 10-01-00096 and 10-01-00139, the Federal Program ``Scientific and scientific-pedagogical staff of innovative Russia'' 2009--2013}

\email{r\_n\_karasev@mail.ru}
\address{
Roman Karasev, Dept. of Mathematics, Moscow Institute of Physics
and Technology, Institutskiy per. 9, Dolgoprudny, Russia 141700}

\keywords{Dvoretzky's theorem, Ramsey type theorems, multivariate polynomials}
\subjclass[2000]{46B20, 05D10, 26C10, 52A21, 52A23, 55M35}

\begin{abstract}
In this paper we prove the Gromov--Milman conjecture (the Dvoretzky type theorem) for homogeneous polynomials on $\mathbb R^n$, and improve bounds on the number $n(d,k)$ in the analogous conjecture for odd degrees $d$ (this case is known as the Birch theorem) and complex polynomials. 

We also consider a stronger conjecture on the homogeneous polynomial fields in the canonical bundle over real and complex Grassmannians. This conjecture is much stronger and false in general, but it is proved in the cases of $d=2$ (for $k$'s of certain type), odd $d$, and the complex Grassmannian (for odd and even $d$ and any $k$). Corollaries for the John ellipsoid of projections or sections of a convex body are deduced from the case $d=2$ of the polynomial field conjecture.
\end{abstract}

\maketitle

\section{Introduction}

The following theorem was conjectured in~\cite{mil1988} (see also~\cite{mil1992}), it is known as the Gromov--Milman conjecture. This theorem resembles the famous theorem of Dvoretzky~\cite{dvor1961} on near-elliptical sections of convex bodies. It considers polynomials instead of convex bodies, and unlike the Dvoretzky theorem, it gives strict ``roundness'' rather than approximate ``roundness''.

\begin{thm}
\label{ramsey-pol}
For an even positive integer $d$ and a positive integer $k$ there exists $n(d, k)$ such that for any homogeneous polynomial $f$ of degree $d$ on $\mathbb R^n$, where $n\ge n(d,k)$, there exists a linear $k$-subspace $V\subseteq \mathbb R^n$ such that $f|_V$ is proportional to the $d/2$-th power of the standard quadratic form
$$
Q = x_1^2+x_2^2+\dots+x_n^2.
$$
\end{thm}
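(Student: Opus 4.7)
The plan is to recast the theorem as a vector-bundle section problem over the real Grassmannian $G_k(\mathbb R^n)$ and then extract a zero of the section by a characteristic-class obstruction. Over $G_k(\mathbb R^n)$ there is the tautological $k$-plane bundle $\gamma_k$, and the bundle of degree-$d$ polynomials on its fibres is $\mathrm{Sym}^d\gamma_k^*$. Because each fibre $V$ inherits the form $Q|_V$, the group $O(V)$ acts, and one has the $O(V)$-invariant splitting $\mathrm{Sym}^d V^* = \bigoplus_{j=0}^{d/2} Q|_V^{\,j}\cdot \mathcal H_{d-2j}(V)$, where $\mathcal H_m$ denotes harmonic polynomials of degree $m$. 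Globally this yields $\mathrm{Sym}^d\gamma_k^* = \bigoplus_j Q^j\cdot \mathcal H_{d-2j}(\gamma_k)$, with $\mathcal H_m(\gamma_k)$ associated to $\gamma_k$ via the $O(k)$-representation on $\mathcal H_m(\mathbb R^k)$. The conclusion $f|_V\propto Q|_V^{d/2}$ holds precisely when the image of $f|_V$ in the positive-degree harmonic part $E := \bigoplus_{2\le m\le d,\ m\text{ even}} \mathcal H_m(\gamma_k)$ vanishes.

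Thus a fixed $f$ gives a continuous section $s_f$ of $E\to G_k(\mathbb R^n)$, and the theorem reduces to showing that, for $n$ large enough, every such section must vanish somewhere. By the standard obstruction argument (a nowhere-zero section of a real rank-$r$ bundle splits off a trivial line, forcing the top Stiefel--Whitney class $w_r$ to be zero) this follows once $w_{\mathrm{top}}(E)\neq 0$ in $H^*(G_k(\mathbb R^n);\mathbb F_2)$. When orientability is available one can upgrade to the integer Euler class; the mod-$2$ version suffices here.

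The remaining work is the topological computation. Using the splitting principle, formally factor $w(\gamma_k)=\prod_{i=1}^k(1+x_i)$ and express $w(\mathcal H_m(\gamma_k))$ as the symmetric polynomial in the $x_i$ coming from the weight decomposition of $\mathcal H_m(\mathbb R^k)$ under the maximal torus of $O(k)$. Multiplying over even $m\in[2,d]$ produces $w_{\mathrm{top}}(E)$ as an explicit symmetric polynomial $P(x_1,\dots,x_k)\in\mathbb F_2[x_1,\dots,x_k]$. Since $H^*(G_k(\mathbb R^n);\mathbb F_2)$ is a quotient of $\mathbb F_2[w_1(\gamma_k),\dots,w_k(\gamma_k)]$ by relations living in degrees $\ge n-k+1$, once $n$ is large enough (depending on $d$ and $k$) no relation can kill a class of the relevant degree, and one only has to verify that $P$ is nonzero in the free polynomial ring. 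This algebraic verification is the main obstacle: the top classes of the various $\mathcal H_m(\gamma_k)$ must not cancel modulo $2$, and to handle all $d$ uniformly one likely needs either a closed form for the total harmonic Stiefel--Whitney class, or an inductive scheme on $d$ peeling off one harmonic summand at a time and invoking the previous case on an auxiliary subspace.
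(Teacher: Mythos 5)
Your reduction to a section problem over the Grassmannian is exactly the paper's ``Conjecture~\ref{grass-pol}'': a polynomial $f$ of degree $d$ on $\mathbb R^n$ gives a tautological section of $\Sigma^d(\gamma_n^k)$, and (after splitting off the invariant line spanned by $Q^{d/2}$ in each fibre, which is your harmonic decomposition $E$) the desired $V$ is a zero of the induced section of $E$. The structural part of your write-up is correct.

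The gap is that your plan is to prove the universal statement that \emph{every} section of $E$ over $G_n^k$ vanishes, via $w_{\mathrm{top}}(E)\neq 0$. That universal statement is false, and the paper says so explicitly: Conjecture~\ref{grass-pol} fails for all odd $k$ (Hsiang--Hsiang) and, more to the point for even degrees, Burago--Ivanov--Tabachnikov produce, for $k=4$ and every $d\ge 4$, a section of $\Sigma^d(\gamma_n^k)$ that is nowhere a multiple of $Q^{d/2}$. Such a section projects to a nowhere-vanishing section of your $E$, so $w_{\mathrm{top}}(E)$ must in fact be zero in $H^*(G_n^4;\mathbb F_2)$ for $d\ge 4$ and all $n$. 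Thus the ``algebraic verification'' you flagged as the main obstacle cannot succeed in general; the obstruction you are computing genuinely vanishes. No amount of large $n$ or careful bookkeeping with the splitting principle will rescue this, because the obstruction-theoretic argument would, if it worked, prove a statement that is known to have counterexamples.

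To prove Theorem~\ref{ramsey-pol} one therefore has to exploit the fact that the relevant section is tautological, coming from a single polynomial $f$ on $\mathbb R^n$, and not an arbitrary section of the bundle. The paper does this in two steps that have no analogue in your sketch. First, a Borsuk--Ulam theorem for $p$-toral groups (Lemma~\ref{bu-p-toral}) is applied with the $2$-group $G=(\mathbb Z_2)^m\rtimes\Sigma_m^{(2)}$ acting on $\mathbb R^m$; this forces, for $n$ large, the restriction of $f$ to some $m$-plane to be $G$-invariant, i.e.\ of the very rigid form $\sum a_{i_1\dots i_\delta} y_{i_1}^2\cdots y_{i_\delta}^2$ with coefficients constant on $\Sigma_m^{(2)}$-orbits (described via binary trees). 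Second, a $k$-plane inside this $m$-plane is produced by Milman's averaging argument together with Carath\'eodory's theorem (Lemma~\ref{avg-pol}), applied inductively along the binary-tree structure. Your characteristic-class route does prove the odd-degree and complex analogues (Theorems~\ref{grass-pol-odd}, \ref{grass-pol-comp}), where the top Stiefel--Whitney or Chern class is nonzero, but for even real $d$ it is precisely the wrong tool.
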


\begin{rem}
Actually, the conjecture in~\cite{mil1988} was stated in a slightly different way: the restriction $f|_V$ was required to be proportional to the $d/2$-th power of \emph{some} quadratic form. But a straightforward argument (using the diagonal form in an orthonormal basis) shows that $n(2, k) = 2k-1$, i.e. any quadratic form on $\mathbb R^{2k-1}$ is proportional to the standard form on a subspace of dimension $k$. Hence these two versions are equivalent modulo the precise values of $n(d,k)$, and this equivalence is essentially used in the proofs.
\end{rem}
\begin{rem}
In~\cite{mil1988} it was also conjectured that $n(d,k)$ is of order $k^d$. We do not have results of this kind here because we use a topological Borsuk--Ulam type theorem without explicit bound, see Section~\ref{bu-p-sec} and the remark at the end of Section~\ref{proof-final}.
\end{rem}

Besides the trivial case $d=2$, there were other partial results in this conjecture. Theorem~\ref{ramsey-pol} was proved in~\cite{mil1988, mak1990,mak2003} (the essential idea goes back to M.~Gromov) for $k=2$ by topological methods (actually, the stronger Conjecture~\ref{grass-pol} was proved for $k=2$), and with good bounds for $n(d,2)$. In case of special polynomials of the form $f=x_1^d+x_2^d+\dots+x_n^d$ this theorem was proved in~\cite{sch1913}, see also~\cite{mil1988} for a short proof with the averaging trick. If we let $d$ be odd, this theorem is known as the Birch theorem and holds in a stronger form with good estimates on $n(d,k)$, see~\cite{bir1957,arha2006} and Theorems~\ref{grass-pol-odd} and~\ref{odd-maps} below. In this paper we combine the topological technique with the averaging method of~\cite{mil1988} to prove Theorem~\ref{ramsey-pol}. 

Let us state a more general conjecture, that would imply Theorem~\ref{ramsey-pol}, if it were true.

\begin{defn}
Denote $G_n^k$ the Grassmannian of linear $k$-subspaces in $\mathbb R^n$, denote by $\gamma_n^k : E(\gamma_n^k)\to G_n^k$ its canonical vector bundle.
\end{defn}

\begin{defn}
For a vector bundle $\xi : E(\xi)\to X$ denote $\Sigma^d(\xi)$ its fiberwise symmetric $d$-th power. We consider every vector bundle $\xi$ along with some Riemannian metric on its fibers, i.e. a nonzero section $Q(\xi)$ of $\Sigma^2(\xi)$ inducing a positive quadratic form on every fiber.
\end{defn}

\begin{fcon}
\label{grass-pol}
Suppose $d$ and $k$ are even positive integers. Then there exists $n(d, k)$ such that for every section of the bundle $\Sigma^d(\gamma_n^k)$ over $G_n^k$ with $n\ge n(d,k)$, there exists a subspace $V\in G_n^k$ such that this section is a multiple of $(Q(\gamma_n^k))^{d/2}$ over $V$.
\end{fcon}

This conjecture would imply Theorem~\ref{ramsey-pol}, because every polynomial of degree $d$ defines a section of $\Sigma^d(\gamma_n^k)$ tautologically.

Unfortunately, there already exist some negative results on Conjecture~\ref{grass-pol}. It is shown in~\cite[Ch.~IV, \S~1 (A)]{hsiang1975} (with reference to~\cite{hsiang1967}) that this conjecture fails for odd $k$. The counterexample is for $d=2$ and the oriented Grassmannian (the space $BSO(k)$), but it seems like the case of the Grassmannian $G_\infty^k = BO(k)$ is handled in the same way. The counterexamples for even $d>2$ are obtained by taking the $d/2$-th power of the counterexample for $d=2$. In~\cite{buivta2009} a counterexample to Conjecture~\ref{grass-pol} is given for $k=4$ and $d\ge 4$.

Of course, these counterexamples do not give a counterexample to the original Theorem~\ref{ramsey-pol}. Moreover, it would be sufficient to prove Conjecture~\ref{grass-pol} for some infinite sequence of $k$'s in order to deduce Theorem~\ref{ramsey-pol} for all $k$'s. 

As noted, Conjecture~\ref{grass-pol} is known to be true for $k=2$, see~\cite{mil1988,mak1990,mak2003}. Here we prove another particular case.

\begin{thm}
\label{grass-quad}
Conjecture~\ref{grass-pol} is true for $d=2$ and $k=2p^\alpha$ for a prime $p$. In the first nontrivial case we have a particular estimate
$$
n(2, 4) \le 12.
$$
\end{thm}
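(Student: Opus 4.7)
\medskip
\noindent\textbf{Proof plan for Theorem~\ref{grass-quad}.}

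The plan is to reformulate the statement as a non-existence of an equivariant section, and then to attack it with a Borsuk--Ulam type argument for a carefully chosen $p$-subgroup of $O(k)$. Given a section $\sigma$ of $\Sigma^2(\gamma_n^k)$, pointwise on every fiber I would decompose $\sigma_V = \lambda(V)\, Q(\gamma_n^k)|_V + \tau_V$ into its $Q$-trace part and its $Q$-traceless part. The decomposition
\[
\Sigma^2(\mathbb R^k) = \mathbb R\cdot Q_0 \;\oplus\; \Sigma^2_0(\mathbb R^k)
\]
is $O(k)$-invariant, so this gives a well-defined section $\tau$ of the sub-bundle $\Sigma^2_0(\gamma_n^k)$ of traceless symmetric $2$-tensors. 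Finding $V$ with $\sigma_V$ proportional to $Q|_V$ is precisely finding $V$ with $\tau_V=0$, so the theorem reduces to showing that the bundle $\Sigma^2_0(\gamma_n^k)$ has no nowhere-zero section for $n\ge n(2,k)$. Equivalently, viewing sections as $O(k)$-equivariant maps from the Stiefel manifold $V_n^k$ to the fiber, I must rule out the existence of an $O(k)$-equivariant map
\[
V_n^k \longrightarrow S\bigl(\Sigma^2_0(\mathbb R^k)\bigr).
\]

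The next step is where the hypothesis $k=2p^\alpha$ enters. I would pick a finite $p$-subgroup $G\subset O(2p^\alpha)$ built from the block decomposition $\mathbb R^{2p^\alpha} = (\mathbb R^2)^{p^\alpha}$: take a Sylow $p$-subgroup $G$ of the wreath product $(\mathbb Z/p^\alpha)\wr P$, where $\mathbb Z/p^\alpha\subset SO(2)$ acts by rotation on each $\mathbb R^2$-block and $P$ is a Sylow $p$-subgroup of the symmetric group $S_{p^\alpha}$ permuting the blocks. The key features that make this choice useful are: (a) $G$ acts freely on $V_n^k$ for all $n\ge k$ since its action on $\mathbb R^k$ is already fixed-point-free on the unit sphere, so the connectivity of $V_n^k$ can be made arbitrarily large by taking $n$ large; and (b) the induced representation of $G$ on $\Sigma^2_0(\mathbb R^k)$ splits into characters whose orders are controlled powers of $p$, so that the $G$-sphere $S(\Sigma^2_0(\mathbb R^k))$ has a well-understood Fadell--Husseini (or $\mathbb F_p$-cohomological) index.

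The heart of the proof is then a cohomological index comparison: the standard Borsuk--Ulam type theorem (see Section~\ref{bu-p-sec} of the paper) says that an $O(k)$-equivariant, hence $G$-equivariant, map $V_n^k\to S(\Sigma^2_0(\mathbb R^k))$ cannot exist once the connectivity of $V_n^k$ (mod $G$) exceeds the $G$-index of the target sphere. Since $V_n^k$ becomes arbitrarily highly connected as $n\to\infty$, while the target sphere has fixed dimension $\tfrac{k(k+1)}{2}-2$, a threshold $n(2,k)$ exists, and this is exactly the desired statement. For the explicit bound $n(2,4)\le 12$ I would specialize to $k=4$, $p=2$, $\alpha=1$, take $G=(\mathbb Z/2)^{3}$ (a Sylow $2$-subgroup of $(\mathbb Z/2)\wr S_2$ acting on $\mathbb R^2\oplus\mathbb R^2$ combined with a ``diagonal'' reflection), compute its index on the $8$-sphere $S(\Sigma^2_0(\mathbb R^4))$, and compare with the connectivity of $V_{12}^{4}/G$, which is high enough.

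The main obstacle I expect is step three: not the general existence of $n(2,k)$, which follows once $G$ is chosen correctly, but the explicit numerical bound $n(2,4)\le 12$. Getting $12$ rather than something larger requires a sharp computation of the $G$-equivariant cohomology of both $V_n^4$ and of $S(\Sigma^2_0(\mathbb R^4))$ for the specific $2$-group above, including a careful identification of the representation of $G$ on $\Sigma^2_0(\mathbb R^4)$ and a precise bookkeeping of Euler classes of its isotypic summands.
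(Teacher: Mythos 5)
Your reduction to the traceless bundle $\Sigma^2_0(\gamma_n^k)$ and the translation to an $O(k)$-equivariant map $V_n^k\to S(\Sigma^2_0(\mathbb R^k))$ match the paper exactly, but there are two genuine gaps in the rest of the plan.

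First, your choice of a \emph{finite} $p$-subgroup fails in precisely the case you single out for the bound. You propose taking $\mathbb Z/p^\alpha\subset SO(2)$ acting by rotation on each $\mathbb R^2$-block. For $p=2$, $\alpha=1$ (i.e.\ $k=4$) this is rotation by $\pi$, namely $-I$ on each block, which acts \emph{trivially} on $\Sigma^2(\mathbb R^2)$; so the invariant quadratic forms on $\mathbb R^4$ under your $G$ are far from being only multiples of $Q$, and the whole argument collapses. The paper avoids this by taking $G = T\rtimes\mathbb Z_{p^\alpha}$ where $T = (SO(2))^{p^\alpha}$ is the \emph{full} rotation torus on the blocks — a $p$-toral group, not a finite $p$-group. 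Lemma~\ref{bu-p-toral} is stated for $p$-toral groups exactly to cover this, and $T$-invariance is what forces the form on each block to be a multiple of $x_{2i-1}^2+x_{2i}^2$. Your construction can be repaired (e.g.\ replace $\mathbb Z/p^\alpha$ by the whole $SO(2)$ in each block, which is the paper's choice, or by $\mathbb Z/p^\beta$ with $p^\beta\ge 3$), but as written it does not prove the case $k=2p^\alpha$ when $p=2$.

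Second, the explicit estimate $n(2,4)\le 12$ cannot come out of an equivariant-index comparison the way you describe. The paper itself remarks (end of Section~\ref{proof-final}) that the Borsuk--Ulam route gives \emph{no} effective bound, because it rests on asymptotic facts about equivariant stable cohomotopy. The bound $n(2,4)\le 12$ in the paper is obtained by a completely different, non-equivariant argument: one works over the oriented Grassmannian, splits off the trivial line $\Sigma^2(\gamma^4)=\xi\oplus\varepsilon$ with $\dim\xi=9$, and uses the rational obstruction of Section~\ref{rational-sect} — the nonexistence of a square root of $p_{16}(\xi)=16a^2b^2(a^2-b^2)^2$ in $H^*(G_{12}^4,\mathbb Q)$, computed via the extra generator $c=e(\gamma_{12}^8)$ and the relations $abc=0$, $c^2=(a^2+b^2)^4$. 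Your plan has nothing playing this role; a careful equivariant-cohomology computation for a fixed finite $2$-group would at best produce \emph{some} bound, and the bookkeeping of the $G$-index of $V_{12}^4$ you gesture at does not clearly yield $12$.
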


Note that this theorem does not add anything new to Theorem~\ref{ramsey-pol} (the quadratic forms are not interesting there), but it has some applications to sections and projections of convex bodies, given in Section~\ref{proj-sec}. Theorem~\ref{grass-quad} has the following generalization for several sections.

\begin{thm}
\label{grass-quad-mult}
Suppose $k=2p^\alpha$ for a prime $p$, $m$ is a positive integer. Then there exists $n(2, k, m)$ such that for every $m$ sections $s_1, \ldots, s_m$ of the bundle $\Sigma^d(\gamma_n^k)$ over $G_n^k$ with $n\ge n(2,k,m)$, there exists a subspace $V\in G_n^k$ such that all the sections $s_i$ are multiples of $(Q(\gamma_n^k))^{d/2}$ over $V$.
\end{thm}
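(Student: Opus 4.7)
Set $\eta = \eta_n^k := \Sigma^2(\gamma_n^k)/\mathbb R\cdot Q(\gamma_n^k)$, the rank $r := \binom{k+1}{2}-1$ quotient bundle over $G_n^k$. Every quadratic section $s_i$ of $\Sigma^2(\gamma_n^k)$ projects to a section $\bar s_i$ of $\eta$, and $s_i|_V$ is a scalar multiple of $Q(\gamma_n^k)|_V$ if and only if $\bar s_i(V)=0$. Thus the theorem is equivalent to the statement that every section of the Whitney sum $\eta^{\oplus m}$ over $G_n^k$ has a zero. The strategy is to generalize the obstruction argument behind Theorem~\ref{grass-quad} from one copy of $\eta$ to $m$ copies, by showing that the corresponding mod-$p$ Euler class is still nonzero.

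The key identity is $e(\eta^{\oplus m}) = e(\eta)^m$ in $H^*(G_n^k;\mathbb F_p)$. From the proof of Theorem~\ref{grass-quad} one extracts that, for $k=2p^{\alpha}$ and $n$ large enough, $e(\eta)$ is nonzero modulo $p$, and in fact comes from a nonzero element in the stable cohomology $H^*(BO(k);\mathbb F_p)$ via the classifying map $G_n^k\to BO(k)$. The stabilization map is an isomorphism in a range of degrees growing with $n$, so provided the power $e(\eta)^m$ is nonzero in the stable ring, choosing $n\ge n(2,k,m)$ large enough that $mr$ lies within this stable range will transport the non-vanishing down to $G_n^k$.

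It therefore remains to show that $e(\eta)^m\neq 0$ in $H^*(BO(k);\mathbb F_p)$. For $p=2$ this is immediate: the cohomology is a polynomial algebra $\mathbb F_2[w_1,\dots,w_k]$, hence an integral domain, and any nonzero element has nonzero powers. For odd $p$ the cohomology $H^*(BO(k);\mathbb F_p)=H^*(BSp(\lfloor k/2\rfloor);\mathbb F_p)$-like ring splits as a polynomial algebra in Pontryagin classes (modulo $2$-torsion which vanishes after inverting~$2$), so one identifies $e(\eta)$ as a polynomial expression in the Pontryagin classes of $\gamma_n^k$ using the $O(k)$-decomposition of the representation $\Sigma^2\mathbb R^k/\mathbb R$, and concludes non-nilpotency from the polynomial structure.

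The main obstacle is precisely this last identification: one must verify that the mod-$p$ Euler class $e(\eta)$, which a priori lives in a cohomology ring with a nontrivial nilradical at odd primes, actually represents a non-nilpotent element. This boils down to expressing the top Chern/Pontryagin class of $\Sigma^2(\gamma)/\mathbb R$ in terms of the roots of $\gamma$ via the splitting principle and checking that the resulting symmetric polynomial does not vanish when raised to arbitrary powers, which for $k=2p^{\alpha}$ can be arranged using the same $p$-adic divisibility that makes Theorem~\ref{grass-quad} work for a single section.
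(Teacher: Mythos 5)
Your initial reduction is correct and matches the framing the paper uses (Problem~\ref{equiv-sect}(iii)): writing $\eta := \Sigma^2(\gamma_n^k)/\mathbb R\cdot Q(\gamma_n^k)$, the statement is equivalent to showing every section of $\eta^{\oplus m}$ over $G_n^k$ has a zero for $n$ large. However, the proposed proof of that nonvanishing via the mod-$p$ Euler class cannot succeed, and it also mischaracterizes how the paper actually proves Theorem~\ref{grass-quad}.

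The concrete problem is that $e(\eta)$ is zero even in the case you treat as ``immediate.'' Working over $G_\infty^k=BO(k)$ with Stiefel--Whitney roots $t_1,\dots,t_k$, one has
$$
w\bigl(\Sigma^2(\gamma_\infty^k)\bigr)=\prod_{1\le i\le j\le k}(1+t_i+t_j)\equiv\prod_{1\le i<j\le k}(1+t_i+t_j)\pmod 2,
$$
since the diagonal factors $1+2t_i$ collapse to $1$. Thus $w_q(\Sigma^2(\gamma))=0$ for $q>\binom{k}{2}$, while $\eta$ has rank $r=\binom{k+1}{2}-1=\binom{k}{2}+k-1>\binom{k}{2}$ for $k\ge 2$, and $w(\eta)=w(\Sigma^2(\gamma))$ because the complementary line is trivial. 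Hence the top Stiefel--Whitney (mod-$2$ Euler) class $w_r(\eta)$ vanishes already for $m=1$, $k=2$ — a case that is nevertheless a true theorem. So the Euler-class obstruction carries no information here, and your $p=2$ argument (``nonzero element of a polynomial ring is non-nilpotent'') rests on a false premise. For odd $p$ there is a further obstruction to even formulating the argument: $\eta$ is not orientable over the unoriented Grassmannian, since $\det\Sigma^2\mathbb R^k=(\det)^{k+1}$ restricts nontrivially to $O(k)$ for even $k$, so there is no untwisted mod-$p$ or integral Euler class in $H^*(BO(k))$ to raise to the $m$-th power.

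Finally, the claim that the proof of Theorem~\ref{grass-quad} exhibits a nonzero mod-$p$ Euler class has no support in the paper. The paper does not compute Euler classes for the general case; it views the $m$ sections jointly as an $O(k)$-equivariant map $f=(f_1,\dots,f_m):V_\infty^k\to\Sigma^2(\mathbb R^k)^{\oplus m}$, restricts the equivariance to the $p$-toral subgroup $G=T\rtimes\mathbb Z_{p^\alpha}\subset O(k)$ (independent rotations of $p^\alpha$ coordinate $2$-planes, permuted cyclically), and applies Lemma~\ref{bu-p-toral}. That Borsuk--Ulam theorem for $p$-toral groups is proved through the Burnside ring and equivariant stable cohomotopy and is strictly stronger than any (twisted or untwisted) Euler-class obstruction; the hypothesis $k=2p^\alpha$ enters precisely in making the $G$-fixed quadratic forms be exactly the scalar multiples of $Q$. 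Your proposal replaces the essential tool with a weaker one that provably vanishes, so the gap is not merely a missing computation but a wrong choice of obstruction.
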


The topological proof of Theorems~\ref{grass-quad} and \ref{grass-quad-mult} cannot be applied directly to the cases of Conjecture~\ref{grass-pol} with $d\ge 4$. But returning to Theorem~\ref{ramsey-pol} for multivariate polynomials, we shall see that the similar topological technique is essentially used, along with the averaging trick and some combinatorics.

Now let us turn to the case of odd $d$ in Theorem~\ref{ramsey-pol} and Conjecture~\ref{grass-pol}. This version of Theorem~\ref{ramsey-pol} was known even before the formulation of this conjecture for even-degree polynomials in~\cite{mil1988}. In~\cite{bir1957} it was shown to be true in an (obviously) stronger from, i.e. $f=0$ on a $k$-dimensional subspace. In~\cite{arha2006} the bound on $n(d,k)$ was improved. We are going to improve the bound in~\cite{arha2006}, at least by a factor of $k!$. Moreover, we prove the corresponding result about the sections over the Grassmannian. The topological technique in our proof was also used in~\cite{buivta2009} to study Dvoretzky type theorems over Grassmannians.

\begin{thm}
\label{grass-pol-odd}
Suppose $d$ and $k$ are positive integers, $d$ being odd. Then there exists 
$$
n(d, k) = k + \binom{d+k-1}{d}
$$ 
such that every section of the bundle $\Sigma^d(\gamma_n^k)$ over $G_n^k$ (with $n\ge n(d,k)$) has a zero.
\end{thm}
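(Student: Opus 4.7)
The plan is to show that the top mod-$2$ Stiefel--Whitney class $w_r(\Sigma^d\gamma_n^k)\in H^r(G_n^k;\mathbb Z/2)$ is non-zero, where $r=\binom{d+k-1}{d}$ is the rank of $\Sigma^d\gamma_n^k$. A nowhere-vanishing section of a real rank-$r$ bundle splits off a trivial line summand and hence forces $w_r$ to vanish, so once the non-vanishing is established, every section must have a zero. It is enough to treat the boundary case $n=k+r$: for $n>k+r$, restricting to any $(k+r)$-dimensional subspace $W\subset\mathbb R^n$ identifies $G(W,k)$ with $G_{k+r}^k$ and restricts a given section of $\Sigma^d\gamma_n^k$ to a section of $\Sigma^d\gamma_{k+r}^k$ there.

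To compute $w_r$, I would use the splitting principle: write $\gamma_n^k$ formally as $L_1\oplus\cdots\oplus L_k$ with $w_1(L_i)=x_i$. Then
$$
\Sigma^d\gamma_n^k \;=\; \bigoplus_{1\le i_1\le\cdots\le i_d\le k} L_{i_1}\otimes\cdots\otimes L_{i_d}
$$
is a sum of $r$ line bundles, whose first Stiefel--Whitney classes multiply to
$$
w_r(\Sigma^d\gamma_n^k) \;=\; \prod_{1\le i_1\le\cdots\le i_d\le k}(x_{i_1}+\cdots+x_{i_d})\pmod 2.
$$
Since $d$ is odd, each factor $x_{i_1}+\cdots+x_{i_d}$ reduces modulo $2$ to $\sum_{i\in S}x_i$, where $S\subseteq\{1,\dots,k\}$ collects the indices of odd multiplicity (so $|S|$ is odd and $\le d$); grouping multi-indices by $S$ converts the top class into
$$
w_r(\Sigma^d\gamma_n^k) \;=\; \prod_{\substack{S\subseteq\{1,\dots,k\}\\|S|\text{ odd},\;|S|\le d}}\Bigl(\sum_{i\in S}x_i\Bigr)^{N_S},\qquad N_S=\binom{(d-|S|)/2+k-1}{k-1}.
$$

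The remaining step is to verify that this symmetric polynomial is non-zero in $H^*(G_{k+r}^k;\mathbb Z/2)=\mathbb Z/2[w_1,\dots,w_k]/(\bar w_{r+1},\bar w_{r+2},\dots)$, where the $\bar w_j$ are the homogeneous components of the formal inverse of $1+w_1+\cdots+w_k$. My plan is to pull back to the complete flag manifold of $\mathbb R^{k+r}$---over which $H^*$ is $\mathbb Z/2[x_1,\dots,x_k]$ modulo Borel relations---and pair the product above against the top Schubert class. The $|S|=1$ factors already contribute $w_k^{N_{\{1\}}}=(x_1\cdots x_k)^{N_{\{1\}}}$, a large power of the top class of $\gamma_n^k$, and the remaining factors (one for each odd subset $|S|\ge 3$) must be shown not to push the product into the relation ideal. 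The main obstacle will be exactly this combinatorial check: at $n=k+r$ the identity $\dim G_n^k = kr$ makes the Grassmannian relations tight, so one must rule out cancellation by a careful Schubert-calculus argument exploiting the $S_k$-symmetry of the product---the same symmetry that accounts for the $k!$ improvement over the bound in~\cite{arha2006}.
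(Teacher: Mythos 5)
Your approach is essentially the paper's: show the top Stiefel--Whitney class $w_r(\Sigma^d\gamma_n^k)$, $r=\binom{d+k-1}{d}$, is nonzero, using the splitting principle and the observation that for odd $d$ every linear factor is nonzero modulo $2$. But you stop short of completing it. Where you write that ``the remaining step is to verify that this symmetric polynomial is non-zero in $H^*(G_{k+r}^k;\mathbb Z/2)$'' and that ``the main obstacle will be exactly this combinatorial check,'' the paper closes the argument with a one-line dimension count --- and you already display all the ingredients for it.

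The missing step is a degree count, not Schubert calculus. You quote the presentation $H^*(G_{k+r}^k;\mathbb Z/2)\cong\mathbb Z/2[w_1,\dots,w_k]/(\bar w_{r+1},\bar w_{r+2},\dots)$, whose relation ideal is generated by homogeneous elements of degree $\ge r+1$. Such an ideal meets degree $\le r$ only in $0$, so the quotient map $\mathbb Z/2[w_1,\dots,w_k]\to H^*(G_{k+r}^k;\mathbb Z/2)$ is injective in degrees $\le r$. The class $w_r(\Sigma^d\gamma^k)$ sits in degree exactly $r$ and is a product of $r$ nonzero linear forms in the $x_i$ coming from the splitting principle, hence a nonzero symmetric polynomial; therefore it survives into $H^r(G_{k+r}^k;\mathbb Z/2)$. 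This is exactly how the paper passes from $G_\infty^k$ to finite Grassmannians (the kernel of $H^*(G_\infty^k;\mathbb F_2)\to H^*(G_n^k;\mathbb F_2)$ is concentrated in degrees $>n-k$), and it works uniformly for every $n\ge k+r$, so your preliminary reduction to the boundary case $n=k+r$ is also unnecessary. The regrouping of factors by the odd-multiplicity subset $S$ and the proposed pairing against a top Schubert class are likewise extraneous: without the dimension observation the proof is genuinely incomplete, and with it those maneuvers add nothing.
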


Simple dimension considerations show that $n(d,k)$ cannot be made less than
$$
k + \frac{1}{k} \binom{d+k-1}{d}
$$
in this theorem. We may conclude that the bound in Theorem~\ref{grass-pol-odd} is quite satisfactory, but still may be improved.
 
The topological approach with Grassmannians also allows us to prove the following version of Theorem~\ref{ramsey-pol} for odd polynomial maps. Of course, the corresponding version of Conjecture~\ref{grass-pol} is also true, but its statement would be too complicated, so we formulate the statement without the Grassmannian and bundles here.

\begin{thm}
\label{odd-maps}
Suppose $d,k,m$ are positive integers, $d$ being odd. Then there exists 
$$
n(d, k, m) = k + m\sum_{1\le \delta\le d,\ \delta\equiv 1\mod 2} \binom{\delta+k-1}{\delta}
$$ 
with the following property. Suppose $f: \mathbb R^n\to \mathbb R^m$ is an odd polynomial map, such that $n\ge n(d,k,m)$, and the coordinate functions of $f$ have degrees $\le d$. Then $f$ maps some $k$-dimensional linear subspace $L\subset \mathbb R^n$ to zero.
\end{thm}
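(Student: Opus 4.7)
The plan is to reduce Theorem~\ref{odd-maps} to the natural Whitney-sum analogue of Theorem~\ref{grass-pol-odd}.

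Since $f=(f_1,\dots,f_m)$ is odd, each coordinate decomposes into odd homogeneous components
$$
f_j \;=\; \sum_{\substack{1\le\delta\le d\\ \delta\text{ odd}}} f_{j,\delta}, \qquad \deg f_{j,\delta}=\delta.
$$
For any subspace $L\subseteq\mathbb R^n$ and $x\in L$, the univariate polynomial $t\mapsto f_j(tx)=\sum_\delta t^\delta f_{j,\delta}(x)$ vanishes identically iff $f_{j,\delta}(x)=0$ for every odd $\delta\le d$. Hence $f|_L\equiv 0$ is equivalent to $f_{j,\delta}|_L\equiv 0$ for every $j$ and every such $\delta$.

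Each $f_{j,\delta}$ defines a tautological section $V\mapsto f_{j,\delta}|_V$ of $\Sigma^\delta(\gamma_n^k)$ over $G_n^k$ (using the Euclidean metric to identify $V^*$ with $V$, and hence $\Sigma^\delta(V^*)$ with $\Sigma^\delta(V)$). Assembling these into one section $s$ of the Whitney sum
$$
E \;=\; \bigoplus_{j=1}^{m}\;\bigoplus_{\substack{1\le\delta\le d\\ \delta\text{ odd}}} \Sigma^\delta(\gamma_n^k),
$$
of rank $r := m\sum_{\delta}\binom{\delta+k-1}{\delta}$, a zero of $s$ at $V\in G_n^k$ is exactly a $k$-subspace on which $f$ vanishes. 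Theorem~\ref{odd-maps} therefore reduces to the claim that every section of $E$ vanishes at some point of $G_n^k$, provided $n\ge k+r$ --- the direct Whitney-sum generalization of Theorem~\ref{grass-pol-odd}, which is the case $m=1$ with a single odd $\delta=d$.

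I expect this claim to follow from the same topological machinery that proves Theorem~\ref{grass-pol-odd}. The strategy is to exhibit an appropriate characteristic class of $E$ in $H^r(G_n^k;\mathbb Z/2)$ that is nonzero, thereby precluding a nowhere-vanishing section of the rank-$r$ bundle $E$. By the Whitney product formula, $w(E)=\prod_{j,\delta}w(\Sigma^\delta(\gamma_n^k))$, and each factor is a universal polynomial in the Stiefel--Whitney classes of $\gamma_n^k$. Moreover, for odd $\delta$ the bundle $\Sigma^\delta(\gamma_n^k)$ contains $\gamma_n^k$ as a distinguished rank-$k$ subbundle (via $v\mapsto v\cdot Q(\gamma_n^k)^{(\delta-1)/2}$), so each factor of $w(E)$ is divisible by $w(\gamma_n^k)$.

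The main obstacle is the final characteristic-class calculation: showing that the degree-$r$ component of $\prod_{j,\delta}w(\Sigma^\delta(\gamma_n^k))$ survives in $H^r(G_n^k;\mathbb Z/2)$ once $n\ge k+r$. For a single factor this is exactly what pins down the bound $n\ge k+\binom{d+k-1}{d}$ in Theorem~\ref{grass-pol-odd}; here one must track several summands at once and rule out cancellation in the top degree. The common factor $w(\gamma_n^k)$ in every summand should coordinate these terms and keep the combinatorics tractable.
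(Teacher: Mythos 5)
Your reduction is exactly the paper's: decompose each odd coordinate $f_j$ into its odd homogeneous pieces $f_{j,\delta}$, observe that $f|_L\equiv 0$ iff every $f_{j,\delta}|_L\equiv 0$, and package these as a single section of the rank-$r$ Whitney sum $E=\bigoplus_{j,\delta}\Sigma^\delta(\gamma_n^k)$. That much is correct and is precisely how the paper passes from the Theorem~\ref{grass-pol-odd} computation to Theorem~\ref{odd-maps}.

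The gap is that you stop at the step you call ``the main obstacle'' and only \emph{expect} the top Stiefel--Whitney class of $E$ to be nonzero, worrying about possible cancellation among the several summands. That worry is unfounded, and seeing why it is unfounded is the whole content of the step. The paper first works over $G_\infty^k$, where $H^*(G_\infty^k;\mathbb F_2)\cong\mathbb F_2[w_1,\dots,w_k]$ is a subring of $\mathbb F_2[t_1,\dots,t_k]$ and hence an integral domain. The degree-$r$ component of $w(E)=\prod_{j,\delta}w(\Sigma^\delta(\gamma_\infty^k))$ is simply the product of the \emph{top} Stiefel--Whitney classes of the summands --- there are no cross-terms in the top degree of a product of total classes. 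Each of those top classes is nonzero by the splitting-principle computation already done for Theorem~\ref{grass-pol-odd} (for odd $\delta$, every linear factor $i_1t_1+\cdots+i_kt_k$ with $i_1+\cdots+i_k=\delta$ is nonzero mod $2$), and a product of nonzero elements in a domain is nonzero. So $w_r(E)\neq 0$ in $H^r(G_\infty^k;\mathbb F_2)$ with no case analysis at all. The passage to $G_n^k$ is then the same dimension count you already cite for a single factor: the kernel of $H^*(G_\infty^k;\mathbb F_2)\to H^*(G_n^k;\mathbb F_2)$ is the ideal generated by the dual Stiefel--Whitney classes $\bar w_i$ with $i>n-k$, which has no elements in degree $\le n-k$; once $n\ge k+r$ this ideal cannot kill $w_r(E)$. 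Your observation that each factor of $w(E)$ is divisible by $w(\gamma_n^k)$ is true but not needed and does not help close the argument. In short: correct strategy, but the crucial nonvanishing is left as a hope, when in fact it follows immediately from the domain property of $H^*(G_\infty^k;\mathbb F_2)$.
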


Similar theorems are also true for complex polynomials, Grassmannians, and bundles. In this case the degree does not have to be odd, it can be arbitrary. The following result is the complex analogue of Conjecture~\ref{grass-pol}.

\begin{thm}
\label{grass-pol-comp}
Suppose $d$ and $k$ are positive integers. Then there exists 
$$
n(d, k) = k + \binom{d+k-1}{d}
$$ 
such that every section of the bundle $\Sigma^d(\mathbb C\gamma_n^k)$ over $\mathbb CG_n^k$ (with $n\ge n(d,k)$) has a zero.
\end{thm}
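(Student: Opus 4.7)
The plan is to obtain a nonzero cohomology-class obstruction to any section of $\Sigma^d(\mathbb{C}\gamma_n^k)$ being nowhere-zero, in close analogy with Theorem~\ref{grass-pol-odd}. Concretely, I would show that the top Chern class $c_r\bigl(\Sigma^d(\mathbb{C}\gamma_n^k)\bigr)$, where $r=\binom{d+k-1}{d}$ is the complex rank of the bundle, is nonzero in $H^{2r}(\mathbb{C}G_n^k;\mathbb{Z})$. Since the top Chern class of a complex vector bundle of rank $r$ coincides with the Euler class of its underlying oriented real rank-$2r$ bundle, its nonvanishing rules out any nowhere-zero section.

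First, apply the splitting principle: letting $a_1,\dots,a_k$ be the formal Chern roots of $\mathbb{C}\gamma_n^k$, the Chern roots of $\Sigma^d(\mathbb{C}\gamma_n^k)$ are the $r$ linear forms $a_{i_1}+\dots+a_{i_d}$ indexed by weakly increasing tuples $1\le i_1\le\dots\le i_d\le k$. Consequently
$$c_r\bigl(\Sigma^d(\mathbb{C}\gamma_n^k)\bigr)=\prod_{1\le i_1\le\dots\le i_d\le k}(a_{i_1}+\dots+a_{i_d}),$$
which is a product of nonzero linear forms in the integral domain $\mathbb{Z}[a_1,\dots,a_k]$, hence a nonzero symmetric polynomial and so a nonzero class in $H^{2r}(BU(k);\mathbb{Z})=\mathbb{Z}[c_1,\dots,c_k]$.

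The crux is to transfer this nonvanishing to the finite Grassmannian. I would use the standard presentation $H^*(\mathbb{C}G_n^k;\mathbb{Z})=\mathbb{Z}[c_1,\dots,c_k]/J_{n,k}$, where $J_{n,k}$ is the ideal generated by the degree-$(n-k+i)$ homogeneous components (for $i=1,\dots,k$) of the formal inverse $(1+c_1+\dots+c_k)^{-1}$; these express the vanishing of the Chern classes of the rank-$(n-k)$ complementary bundle beyond its rank. Every generator of $J_{n,k}$ has complex degree at least $n-k+1$, so the tautological map $\mathbb{Z}[c_1,\dots,c_k]\to H^*(\mathbb{C}G_n^k;\mathbb{Z})$ is injective in cohomological degrees $\le 2(n-k)$. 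When $n\ge k+r=n(d,k)$ the class above lies in degree $2r\le 2(n-k)$ and therefore survives as a nonzero element of $H^{2r}(\mathbb{C}G_n^k;\mathbb{Z})$, finishing the argument.

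The only delicate point is the degree bound on $J_{n,k}$; once it is in place, the rest is routine characteristic-class bookkeeping. In contrast to the even real case of Theorem~\ref{ramsey-pol}, no averaging trick or Borsuk--Ulam machinery enters here: the integral Chern class computation carries the whole argument, which also explains why $d$ may be arbitrary rather than only odd in the complex setting.
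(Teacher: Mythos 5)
Your argument is correct and matches the paper's: both compute the top Chern class of $\Sigma^d(\mathbb C\gamma^k)$ via the splitting principle, observe it is a product of nonzero linear forms in the Chern roots (hence nonzero over $\mathbb Z$ for every $d$, since $\mathbb Z[a_1,\dots,a_k]$ is an integral domain), and then pass from $BU(k)$ to the finite Grassmannian by the degree bound coming from the fact that the kernel ideal is generated in complex degree $\ge n-k+1$. You have simply spelled out the presentation of $H^*(\mathbb CG_n^k;\mathbb Z)$ and the injectivity range in more detail than the paper, which states the analogous real case and says the complex case is ``the same with Chern classes.''
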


The following result is the unified stronger analogue of Theorems~\ref{ramsey-pol} and \ref{odd-maps} (even and odd degrees) over the complex field.

\begin{thm}
\label{compl-maps}
Suppose $d,k,m$ are positive integers. Then there exists 
$$
n(d, k, m) = k + m \binom{d+k}{d}
$$ 
with the following property. Suppose $f: \mathbb C^n\to \mathbb C^m$ is a polynomial map, such that $n\ge n(d,k,m)$, and the coordinate functions of $f$ have degrees $\le d$. Then $f$ maps some $k$-dimensional linear subspace $L\subset \mathbb C^n$ to zero.
\end{thm}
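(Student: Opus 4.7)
The proof plan is to extend the Grassmannian/Chern-class argument of Theorems~\ref{grass-pol-comp} and~\ref{odd-maps} to the inhomogeneous complex setting with an $m$-dimensional target. Note first that if $f(0)\ne 0$, the conclusion fails trivially (every linear subspace contains the origin), so we may assume $f(0)=0$ and treat $f$ as a polynomial map of degree $\le d$ without constant term.

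Over $\mathbb C G_n^k$ I would introduce the complex vector bundle
$$
\eta \;=\; \bigoplus_{\delta=1}^{d}\Sigma^{\delta}\bigl((\mathbb C\gamma_n^k)^*\bigr)^{\oplus m},
$$
whose fiber over $L$ is exactly the space of polynomial maps $L\to\mathbb C^m$ of degree $\le d$ vanishing at $0$; its complex rank $r=m(\binom{d+k}{d}-1)$ is bounded above by the $m\binom{d+k}{d}$ appearing in the hypothesis. Restriction $L\mapsto f|_L$ defines a canonical section $s_f$ of $\eta$, and a zero of $s_f$ is precisely a $k$-subspace on which $f$ vanishes identically. Thus the theorem reduces to showing that every section of $\eta$ vanishes somewhere whenever $n\ge k+m\binom{d+k}{d}$; by the usual complex-bundle obstruction (a rank-$r$ complex bundle with nonzero top Chern class admits no nowhere-vanishing section), this follows once $c_r(\eta)\in H^{2r}(\mathbb C G_n^k;\mathbb Z)$ is nonzero.

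I would compute $c_r(\eta)$ via the splitting principle: if $x_1,\dots,x_k$ are the Chern roots of $(\mathbb C\gamma_n^k)^*$, then the Chern roots of $\Sigma^\delta((\mathbb C\gamma_n^k)^*)$ are the linear forms $\alpha_1x_1+\cdots+\alpha_kx_k$ over multi-indices $\alpha\in\mathbb Z_{\ge 0}^k$ with $|\alpha|=\delta$, giving
$$
c_r(\eta) \;=\; \Biggl(\prod_{1\le|\alpha|\le d}(\alpha_1x_1+\cdots+\alpha_kx_k)\Biggr)^{\!m}.
$$
As a polynomial in $\mathbb Q[x_1,\dots,x_k]$ this is a product of nonzero linear forms and hence nonzero. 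The final step, which actually uses the hypothesis $n-k\ge r$, is the cohomological nonvanishing: every Schur polynomial $s_\lambda(x_1,\dots,x_k)$ with $|\lambda|=r$ has $\lambda_1\le r\le n-k$ and so represents a nonzero Schubert class in $H^{2r}(\mathbb C G_n^k;\mathbb Z)$, whence the natural map from symmetric polynomials of degree $r$ in $x_1,\dots,x_k$ into $H^{2r}(\mathbb C G_n^k;\mathbb Z)$ is injective and $c_r(\eta)$ survives. This Schubert-calculus injectivity is the principal obstacle to turning the plan into a rigorous proof; it is however completely parallel to the argument underlying Theorem~\ref{grass-pol-comp}, and the complex setting obviates any Borsuk--Ulam or $\mathbb Z/2$-equivariant complications that appear in the real case.
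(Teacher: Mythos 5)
Your proposal is correct and follows essentially the same route as the paper: restrict $f$ to $k$-subspaces to obtain a section of $\bigoplus_{\delta}\Sigma^\delta(\mathbb C\gamma_n^k)^{\oplus m}$ over $\mathbb CG_n^k$, compute the top Chern class by the splitting principle as a product of nonzero linear forms in the Chern roots, and observe that a nonzero symmetric polynomial of degree $r\le n-k$ cannot vanish in $H^{2r}(\mathbb CG_n^k;\mathbb Z)$. Two small remarks: your observation that the constant term must vanish is a genuine point the paper leaves tacit (its bound via $\sum_{\delta=0}^d\binom{\delta+k-1}{k-1}=\binom{d+k}{k}$ slightly over-counts the rank $m\bigl(\binom{d+k}{d}-1\bigr)$ of the relevant bundle, which has no $\delta=0$ summand); and the ``Schubert-calculus injectivity'' you flag as the principal obstacle is entirely standard and is exactly the paper's own argument, phrased there as the statement that the kernel of $H^*(\mathbb CG_\infty^k;\mathbb Z)\to H^*(\mathbb CG_n^k;\mathbb Z)$ is generated by dual Chern classes of degree exceeding $n-k$, so no nonzero class of degree $r\le n-k$ can lie in it.
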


The authors thank Vitali Milman for useful discussion and comments, Ilya Bogdanov and Dima Faifman, who read the paper to verify the reasoning, and the referee for numerous useful remarks.

\section{Proof of theorems on odd and complex polynomials}
\label{euler-sec}

We start with proofs of the theorems concerning odd and complex polynomials, because their proofs are simple and give a good idea of the topological machinery. The reader can find standard topological facts about characteristic classes of vector bundles in the textbooks~\cite{hatcher2002,milsta1974,mishch1998}, if  needed.

First, consider the infinite Grassmannian $G_\infty^k$ and the canonical bundle $\gamma_\infty^k$ over it. The cohomology $H^*(G_\infty^k, \mathbb F_2)$ is a subalgebra of $\mathbb F_2[t_1,\ldots, t_k]$, consisting of symmetrical polynomials, the Stiefel-Whitney class of $\gamma_\infty^k$ is
$$
w(\gamma_\infty^k) = \prod_{i=1}^k (1+t_i).
$$
Hence, the Stiefel-Whitney class of $\Sigma^d(\gamma_\infty^k)$ is 
$$
w(\Sigma^d(\gamma_\infty^k)) = \prod_{i_1,\ldots, i_k\ge 0}^{i_1+\dots + i_k=d} (1 + i_1t_i + \dots +i_kt_k).
$$
Since $d$ is odd, then none of the expressions $i_1t_i + \dots i_kt_k$ is zero $\mod 2$, and we obtain that the topmost Stiefel-Whitney class of $\Sigma^d(\gamma_\infty^k)$ of dimension $\binom{d+k-1}{d}$ is nonzero. By the standard reasoning this means that $\Sigma^d(\gamma_\infty^k)$ cannot have a section without zeros. 

Now we have to go back to finite Grassmannians. The kernel of the natural map $H^*(G_\infty^k, \mathbb F_2)\to H^*(G_n^k, \mathbb F_2)$ is generated by the dual Stiefel-Whitney classes of $\gamma_\infty^k$ of dimensions $>n-k$. If $n\ge k + \binom{d+k-1}{d}$ the topmost Stiefel-Whitney class of $\gamma_n^k$ turns out to be nonzero from the dimension considerations.

The proof of Theorem~\ref{odd-maps} proceeds in the same way, considering the bundle 
$$
\bigoplus_{1\le \delta\le d,\ \delta\equiv 1\mod 2} \Sigma^\delta(\gamma),
$$
and taking its $m$-fold Whitney power. Obviously, the topmost Stiefel-Whitney class of the resulting bundle over $G_\infty^k$ is nonzero and has dimension 
$$
n_0=m\sum_{1\le \delta\le d,\ \delta\equiv 1\mod 2} \binom{\delta+k-1}{\delta}.
$$
Then we can pass to the finite Grassmannian $G_{n_0+k}^k$ as above.

The proof of Theorems~\ref{grass-pol-comp} and~\ref{compl-maps} is the same with Chern classes in $H^*(\mathbb CG_n^k, \mathbb Z)$ instead of Stiefel-Whitney classes. In this case the topmost Chern class of $\Sigma^d(\mathbb C\gamma_\infty^k)$ is always nonzero. Besides, in Theorem~\ref{compl-maps} we use the well-known formula
$$
\sum_{\delta=0}^d \binom{\delta+k-1}{k-1} = \binom{d+k}{k}.
$$ 

\section{Borsuk--Ulam property for $p$-toral groups}
\label{bu-p-sec}

Before proving Theorems~\ref{ramsey-pol} and~\ref{grass-quad} we need to consider the following Borsuk--Ulam type problem, see the books~\cite{hsiang1975,bart1993} for facts and definitions, concerning the continuous group actions. By $EG$ we denote a homotopy trivial $G$-$CW$-complex with free action of $G$, here it is sufficient to consider $EG$ as the infinite join $G*G*G*\dots$. All maps and sections of vector bundles are assumed to be continuous.

\begin{prob}
\label{equiv-sect}
Suppose $G$ is a compact Lie group, $V$ its representation. There are three equivalent problems:

i) Determine whether the vector bundle 
$$
EG\times V\to EG
$$ 
has a $G$-equivariant nonzero section.

ii) Determine whether there exists a $G$-equivariant map $f : EG\to S(V)$ to the sphere space of the bundle.

iii) Determine whether the vector bundle 
$$
(EG\times V)/G\to BG = EG/G
$$
has a nonzero section.
\end{prob}

In this section it is convenient to use the statement of this problem in version (ii), with an equivariant map $f : EG\to S(V)$. This version is obviously a generalization of the Borsuk--Ulam theorem.  

Of course, if the representation $V$ has a nonzero fixed point set $V^G$, the map $f$ obviously exists, we can map $EG$ to any point in $V^G\setminus \{0\}$. The following result from~\cite{bcp1991,bart1992,bart1993,clm2000} gives an inverse statement for a special class of groups.

\begin{defn}
Suppose 
$$
0\to T\to G\to F\to 0
$$
is an exact sequence of groups, where $T$ is a torus, $F$ is a finite $p$-group. In this case $G$ is called \emph{$p$-toral}.
\end{defn}

\begin{lem}
\label{bu-p-toral} Suppose $G$ is a $p$-toral group and $V$ its representation. Then the image of any equivariant map $f : EG\to V$ intersects $V^G$. 

The following also holds.  There exists $n(G, V)$ such that if a free $G$-space $X$ is $(n-1)$-connected ($n\ge n(G, V)$) then the image of an equivariant map $f : X\to V$ intersects $V^G$. 
\end{lem}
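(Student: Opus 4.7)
The plan is to recast the problem as a nonvanishing-of-Euler-class question and then to exploit the structural decomposition of a $p$-toral group. First, decompose $V = V^G \oplus W$ orthogonally with respect to a $G$-invariant metric; this gives a splitting into $G$-subrepresentations with $W^G = 0$. An equivariant map $f \colon EG \to V$ misses $V^G$ precisely when its $W$-component is nowhere zero, i.e., precisely when the vector bundle $\xi_W = (EG \times W)/G \to BG$ admits a nowhere-vanishing section. Thus it suffices to show that the Euler class $e(\xi_W) \in H^{\dim W}(BG; \mathbb{F}_p)$ is nonzero, since this obstructs such a section.

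Next, I would invoke the $p$-toral extension $0 \to T \to G \to F \to 0$, with $T$ a torus and $F$ a finite $p$-group. Decompose further $W = W^T \oplus W'$ orthogonally; since $T$ is normal in $G$ this is again a $G$-decomposition, and multiplicativity gives $e(\xi_W) = e(\xi_{W^T}) \cdot e(\xi_{W'})$. On $W'$, the torus $T$ acts with all weights nonzero, so $e_T(W')$ is a nonzero product of weight polynomials in $H^*(BT; \mathbb{Z}) \cong \mathbb{Z}[t_1, \ldots, t_r]$. On $W^T$, the $G$-action factors through the finite $p$-group $F$ and $(W^T)^F = W^G = 0$, so by the classical Borsuk--Ulam property for finite $p$-groups (Smith theory, or equivalently the nonvanishing of the mod-$p$ Euler class of a fixed-point-free $p$-group representation), $e_F(W^T) \ne 0$ in $H^*(BF; \mathbb{F}_p)$.

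The main obstacle is to combine these two partial computations into a single nonvanishing statement in $H^*(BG; \mathbb{F}_p)$. I would handle this with the Serre spectral sequence of the fibration $BT \to BG \to BF$: the torus-weight polynomial $e_T(W')$ is $F$-invariant, hence lifts from $H^*(BT; \mathbb{F}_p)$ to a class in $H^*(BG; \mathbb{F}_p)$ coming from the $E_\infty$-page, while $e_F(W^T)$ is detected on the base of this fibration; showing that these two classes are not zero-divisors against one another in the associated graded yields that $e(\xi_W) \ne 0$. This non-cancellation, together with the verification that Smith theory indeed produces a nonzero $e_F(W^T)$, is the technical heart of the argument.

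Finally, for the quantitative strengthening, I would apply standard obstruction theory. If $X$ is a free $(n-1)$-connected $G$-space, then the classifying map $X/G \to BG$ for the principal $G$-bundle $X \to X/G$ is $n$-connected, so the pullback $H^i(BG; \mathbb{F}_p) \to H^i(X/G; \mathbb{F}_p)$ is an isomorphism for $i < n$ and injective for $i = n$. Setting $n(G, V) := \dim W$ (or slightly larger to be safe) ensures that the pullback of $e(\xi_W)$ to $X/G$ is still nonzero, obstructing a nowhere-vanishing section of $\xi_W$ over $X/G$, and hence forcing any equivariant $f \colon X \to V$ to have image meeting $V^G$.
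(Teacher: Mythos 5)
Your approach is genuinely different from the paper's, and the comparison is instructive, but there is a concrete gap at the step you yourself flag as ``the technical heart.''

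The paper does not compute a characteristic class at all. Instead it replaces $V$ by $U=V/V^G$, observes that a map missing $V^G$ yields a $G$-map $EG\to S(U)$, and then contradicts this using equivariant stable cohomotopy: the natural map $A(G)\to \Pi_G^0(EG)$ from the Burnside ring is injective by Carlsson's theorem (the Segal conjecture, cited as~\cite{carl1984}), while the map $A(G)\to\Pi_G^0(S(U))$ has a kernel when $U^G=0$. This machinery handles all obstructions at once and reduces $p$-toral to finite $p$-groups by passing to a cofinal family of finite $p$-subgroups. Your plan, by contrast, attempts to detect the first obstruction (the Euler class of the bundle $\xi_W$) by splitting along $0\to T\to G\to F\to 0$ and combining the torus and finite contributions via the Serre spectral sequence of $BT\to BG\to BF$.

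The combination step, however, is where the argument actually breaks. You record $e_T(W')$ as a nonzero product of weights in $H^*(BT;\mathbb{Z})$ but need $e_F(W^T)\ne 0$ in $H^*(BF;\mathbb{F}_p)$; to multiply these in the spectral sequence you must reduce the torus class mod $p$, and that reduction can vanish. Concretely, take $G=T=S^1$ and $W=\mathbb{C}$ with weight $p$: then $W^G=0$, but $e_T(W)=p\,t\equiv 0$ in $H^2(BS^1;\mathbb{F}_p)$, so the $\mathbb{F}_p$-Euler class sees nothing even though the lemma's conclusion holds (the integral Euler class $pt$ detects it, but that is a different coefficient ring and does not mesh with the mod-$p$ information from $F$). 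This is not a notational nuisance; it is a genuine failure of the $\mathbb{F}_p$-Euler class to be a universal obstruction for the torus part. You would need either to work over mixed coefficients and prove a non-zero-divisor statement that does not obviously hold, or to abandon the direct $T$--$F$ splitting and instead approximate $G$ by finite $p$-subgroups as the paper does. A further, secondary issue: the assertion that ``the classical Borsuk--Ulam property for finite $p$-groups'' is \emph{equivalent} to nonvanishing of the mod-$p$ Euler class of a fixed-point-free representation is stated as folklore, but it is essentially the content of the lemma itself for finite $p$-groups and is not a routine consequence of Smith theory, which controls fixed-point sets rather than cohomology nonvanishing; invoking it without proof leaves a gap even in the finite case.

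The quantitative half of your proposal is, in outline, fine and in fact sharper than what the paper extracts: if the Euler class in degree $\dim W$ were shown to be nonzero, then the $n$-connectivity of $X/G\to BG$ for $X$ free and $(n-1)$-connected would give $n(G,V)$ roughly $\dim W+1$. The paper instead obtains finiteness from $\Pi_G^0(EG)=\lim_\leftarrow\Pi_G^0(EG_n)$ with no explicit bound (a point the paper itself acknowledges later). So your route, if the gap above were closed, would buy an explicit bound that the paper's method does not provide; as written, though, the Euler class nonvanishing is not established.
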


Let us outline briefly the proof of this lemma following~\cite[Proposition~15]{clm2000}. First, it is enough to consider finite $p$-groups, because $p$-toral groups can be ``approximated'' by $p$-groups. 

Consider the first part: the domain is $EG$. The representation $U=V/V^G$ has the only $G$-invariant vector $0$. Denote the composition of $f$ with the projection onto $U$ by $h$. If the image of $f$ does not intersect $V^G$ then the image of $h$ does not contain $0$. Therefore we obtain a $G$-equivariant map $h: EG\to S(U)$, where $S(U)$ denotes the sphere of the representation $U$.

For any $G$-space $X$ there exists a natural map $\pi_X : \Pi_G^0(\pt)\to \Pi_G^0(X)$, where $\pt$ is a point with trivial $G$-action and $\Pi_G^0$ denotes the stable $G$-equivariant $0$-dimensional cohomotopy. Note also that there is a natural isomophism $A(G)\to \Pi_G^0(\pt)$, where $A(G)$ is the Burnside ring of $G$, see~\cite{carl1984}. Now if we take $S(U)$ as $X$ then the natural map $A(G)\to \Pi_G^0(S(U))$ must have a kernel (see explanations in~\cite{clm2000}). And if we take $EG$ as $X$, then the natural map $A(G)\to \Pi_G^0(EG)$ is injective, see~\cite{carl1984}. Therefore there cannot exist a $G$-equivariant map $h$ from $EG$ to $S(U)$.

Now let us turn to the existence of $n(G,V)$. Following~\cite[Proposition~15]{clm2000} note that by definition 
$$
\Pi_G^0(EG) = \lim_{\leftarrow} \Pi_G^0(EG_n),
$$
where $EG_n$ are $n$-skeleta of $EG$. This means that the map $h : EG_n \to S(U)$ is also prohibited for large enough $n$, say $n\ge n(G,V)$. Assume existence of a $G$-equivariant map $h_X: X\to S(U)$ for some $(n-1)$-connected $X$. From the $G$-equivariant obstruction theory and high connectivity of $X$ there exists a $G$-equivariant map $g : EG_n\to X$, composing them $h=h_X\circ g$ we obtain a contradiction.

\section{The rational obstructions to nonzero sections of vector bundles}
\label{rational-sect}

In order to give a particular bound in Theorem~\ref{grass-quad} for $k=4$, we also need the following expression of the rational obstruction to a nonzero section of some vector bundle $\xi : E(\xi)\to X$. Suppose that $\xi$ is oriented. In case $\dim\xi$ is even the first obstruction is the Euler class $e(\xi)$. Consider the case $\dim\xi = 2m+1$. In this case the rational Euler class is zero, but if $\xi$ has a nonzero section, then we have $\xi=\eta\oplus\varepsilon$, where 
$$
\varepsilon : X\times \mathbb R\to X
$$
is the trivial bundle. The bundle $\eta$ is naturally oriented, and we have (we index the Pontryagin classes by their dimension) 
$$
p_{4m}(\eta) = e(\eta)^2.
$$
Since $p_{4m}(\xi) = p_{4m}(\eta)$ we see that the nonexistence of the square root $\sqrt{p_{4m}(\xi)}$ in $H^*(X,\mathbb Z)$ is an obstruction to a nonzero section of $\xi$. 

Considering the fiberwise Postnikov tower for the sphere bundle $S(\xi)$ it can be shown that this is the only rational obstruction for a nonzero section of $\xi$, but we do not need this fact here.

\section{Proof of Theorems~\ref{grass-quad} and \ref{grass-quad-mult}}

First, let us prove the existence of $n(2,k,m)$ in Theorem~\ref{grass-quad-mult} using Lemma~\ref{bu-p-toral}. We can consider $G_\infty^k$, the existence of $n(2,k,m)$ follows from the obstruction theory as in Lemma~\ref{bu-p-toral}. Considering the infinite Stiefel variety $V_\infty^k$, we have 
$$
G_\infty^k = V_\infty^k/O(k),
$$
in other words $V_\infty^k$ is a realization of $EO(k)$. Now let us decompose $\mathbb R^k$ into $p^\alpha$ $2$-dimensional spaces $L_1\oplus\dots\oplus L_{p^\alpha}$. Let the $p^\alpha$-dimensional torus $T$ act on $\mathbb R^k$ by independent rotations of $L_i$. Let the group $F=\mathbb Z_{p^\alpha}$ cyclically permute the spaces $L_i$. In this case we obtain an action of the $p$-toral group $G=T\rtimes F$ on $\mathbb R^k$.

Now consider the section $s_i$ of $\Sigma^2(\gamma_\infty^k)$ as an $O(k)$-equivariant map $f_i : V_\infty^k\to \Sigma^2(\mathbb R^k)$. Restricting the group action to $G$ we see that the product map $f=(f_1,\ldots,f_m)$ is a $G$-equivariant map to a linear representation space. Hence $f$ should map some frame $x\in V_\infty^k$ to an array of $m$ quadratic forms on $\mathbb R^k$, all being $G$-invariant. Now it remains to note that $T$-invariant quadratic forms on $\mathbb R^k$ should be polynomials in $x_1^2+x_2^2, \ldots, x_{k-1}^2 + x_{k}^2$, i.e. they have the form
$$
Q = \sum_{i=1}^{p^\alpha} a_i(x_{2i-1}^2 + x_{2i}^2).
$$
And if we require them to be $F$-invariant, then we obtain the equality
$$
a_1=a_2=\dots=a_{p^\alpha},
$$
which means that $Q$ is proportional to the standard quadratic form. This completes the proof of existence of $n(2, 2p^\alpha, m)$.

Now consider the particular case $k=4$ in Theorem~\ref{grass-quad}, and suppose that the Grassmannians are oriented in the reasonings below, this is needed to apply the results of Section~\ref{rational-sect}. First, consider a simpler case of the bundle $\Sigma^2(\gamma_\infty^4)$ instead of $\Sigma^2(\gamma_{12}^4)$. The cohomology $H^*(G_\infty^4, \mathbb Q)$ (see~\cite{mishch1998}) is a subalgebra of $\mathbb Q[a, b]$ ($a$ and $b$ are two-dimensional generators), generated by $ab$ (the Euler class) and $a^2+b^2$ (the $4$-dimensional Pontryagin class). Since there exists an $SO(4)$-invariant quadratic form, we can decompose the bundle of quadratic forms
$$
\Sigma^2(\gamma_\infty^4) = \xi\oplus\varepsilon.
$$
Now we have to find an obstruction to a nonzero section of the $9$-dimensional bundle $\xi$. From the standard calculation it follows that
$$
p_{16}(\xi) = p_{16}(\Sigma^2(\gamma_\infty^4)) = 16a^2b^2(a^2-b^2)^2.
$$
Hence $\sqrt{p_{16}(\xi)}=4ab(a^2-b^2)$, which does not belong to $H^*(G_\infty^4,\mathbb Q)$. If we consider $G_{12}^4$ instead of the infinite Grassmannian, we see that the kernel of the natural map
$$
H^*(G_\infty^4,\mathbb Q)\to H^*(G_{12}^4,\mathbb Q)
$$
is generated by the dual Pontryagin classes of $\gamma_\infty^4$ (i.e. the Pontryagin classes of its complementary bundle $\gamma_{12}^8$) of dimension $\ge 20$. Such relations do not affect taking a square root of $p_{16}(\xi)$ by the dimension considerations. Besides the image of $H^*(G_\infty^4,\mathbb Q)$ the cohomology $H^*(G_{12}^4,\mathbb Q)$ has another generator: the Euler class of the complementary bundle $c = e(\gamma_{12}^8)$ of dimension $8$, along with the relations 
$$
abc=0,\quad c^2 = (a^2+b^2)^4.
$$
Let us see whether it helps to take a square root of $p_{16}(\xi)$. Assume that (the expression to the left is an arbitrary cohomology class in $H^8(G_{12}^4, \mathbb Q)$)
$$
(xa^2b^2 + y ab(a^2+b^2) + z (a^2+b^2)^2 + tc)^2 = 16a^2b^2(a^2-b^2)^2.
$$
We have a summand $ztc(a^2+b^2)^2$ on the left hand side, hence $z=0$ (we have already shown that $t$ cannot be zero). Then we have
$$
x^2a^4b^4 + 2xy a^3b^3(a^2+b^2) + y^2 a^2b^2 (a^2+b^2)^2 + t^2(a^2+b^2)^4 = 16a^2b^2(a^2-b^2)^2.
$$
Adding another relation $a=b$ we obtain
$$
x^2a^8 + 4xy a^8 + 4 y^2 a^8 + 16 t^2 a^8 = 0,
$$
and therefore 
$$
(x+2y)^2 + 16 t^2 = 0.
$$
The last equality means that $t = 0$, which is already shown to be false.

\begin{rem}
Note that this way of reasoning may work for larger $d$ if we find a $p$-toral subgroup $G\subset O(k)$ which is dense enough in $O(k)$. Unfortunately, by the well-known theorem of Jordan~\cite{jor1878}, for any finite subgroup $G\subset O(k)$ the index of intersection with the maximal torus $[G:G\cap T]$ is bounded by some constant $J(k)$. Hence, for large enough $d$ there exist nontrivial polynomials of degree $d$ in $k$ variables that are invariant under $G$.
\end{rem}

\section{The $2$-Sylow subgroups of the permutation group}
\label{sylow-sec}

We are going to use the $2$-Sylow subgroup $\Sigma_m^{(2)}$ of the permutation group $\Sigma_m$ in the subsequent proofs, let us give a brief explicit description of it. We consider only the case $m=2^\alpha$.

Consider a full graded binary tree $T$ of height $\alpha$ with $m=2^\alpha$ leaves. The group $H=\Sigma_m^{(2)}$ is the group of all automorphisms of $T$ that preserve the grading. It is generated by the involutions that take a non-leaf node $x$ and switch its two children and their corresponding subtrees.

Denote $[m] = \{1,2,\ldots, m\}$. It is obvious that $H$ acts transitively on $[m]$. A pair $(x,y)\in [m]^2$ can be taken to another pair $(z,t)\in [m]^2$ iff the distance between $x$ and $y$ in $T$ is equal to the distance between $z$ and $t$ in $T$.

Consider a more general question: how to describe the orbit of a subset $S\subseteq [m]$ under the action of $H$. Similar to the case of two-element sets, we consider the minimal subtree $T_S\subseteq T$ spanning $S$ and the root of $T$. If $S$ and $R$ are two different subsets of $[m]$, then they belong to the same $H$-orbit iff $T_S$ and $T_R$ belong to the same $H$-orbit. In the trees $T_S$ and $T_R$ the nodes may have $0$, $1$, or $2$ children; call the latter case a \emph{fork}. If $T_S$ and $T_R$ belong to the same $H$-orbit then their respective highest forks should be on the same level of $T$. If it is so, we can identify the highest forks in $T_S$ and $T_R$ by an automorphism from $H$. Under this fork we have two subtrees $T'_S$, $T''_S$ of $T_S$, and two subtrees $T'_R$, $T''_R$ of $T_R$. Then to make an identification of $T_S$ and $T_R$ we have to identify $T'_S$ with $T'_R$ and $T''_S$ with $T''_R$, or $T'_S$ with $T''_R$ and $T''_S$ with $T'_R$. In either case we again search the highest forks in those subtrees, ensure that their gradings are equal, and proceed recursively.

Finally, we note that the orbit of $S$ is defined by the ``topology'' of forks in $T_S$ and their gradings in the tree $T$. This description will be used in Section~\ref{proof-final}.

\section{Proof of Theorem~\ref{ramsey-pol} for $d=4$. The topological part}
\label{d4-proof-top}

Now we have all the prerequisites to prove Theorem~\ref{ramsey-pol}. For the reader's convenience we first outline the proof in the case $d=4$, the final proof is given in Section~\ref{proof-final}. In this particular case, as well as in the general case we combine the topological technique based on the Borsuk--Ulam theorem for $p$-groups with the averaging argument from~\cite{mil1988}.

First, we apply Lemma~\ref{bu-p-toral} and show that it suffices to prove the theorem for a very special type of homogeneous polynomials of degree $4$.

Let $m = 2^\alpha$. Consider the group $G=(\mathbb Z_2)^m\rtimes \Sigma^{(2)}_m$, acting on a $m$-dimensional space $\mathbb R^m$ as follows. Let $(\mathbb Z_2)^m$ act by changing signs of the coordinates, and let $\Sigma^{(2)}_m$ act by permuting the coordinates. 

The group $G$ is obviously a $2$-group and Lemma~\ref{bu-p-toral} tells that if $n$ is large enough, then every homogeneous polynomial of degree $4$ becomes $G$-invariant after restricting to some $m$-dimensional subspace. Now let us describe $G$-invariant polynomials $f$ on $\mathbb R^m$. The invariance w.r.t. $(\mathbb Z_2)^m$ is equivalent to the fact that
$$
f = \sum_{1\le i,j\le m} a_{ij} y_i^2y_j^2,
$$
where $a_{ij}$ is a symmetric $m\times m$ matrix, $y_i$ are the coordinates in $\mathbb R^m$. The $\Sigma^{(2)}_m$-invariance implies more relations on $a_{ij}$, the results in Section~\ref{sylow-sec} give the following description: the number $a_{ij}$ depends only on distance between $i$ and $j$ in the full binary tree $T$, corresponding to the Sylow subgroup $\Sigma_m^{(2)}$. 

\section{Proof of Theorem~\ref{ramsey-pol} for $d=4$. The geometrical part}
\label{d4-proof-geom}

Now we are going to use an averaging argument, similar to what is given in~\cite{mil1988}. Using the remark after the statement of Theorem~\ref{ramsey-pol}, the proof of its particular case for $f=\sum_{i=1}^n x_i^d$ in~\cite{mil1988}, and the result of the above section, we note the following. In order to prove the theorem, we have to find large enough $m=2^\alpha$ for every given $k$ such that
\begin{equation}
\label{solution}
(x_1^2+x_2^2+\ldots+x_k^2)^2 \sim \sum_{1\le i,j\le m} a_{ij} l_i(x)^2l_j(x)^2,
\end{equation}
where $a_{ij}$ is a given $\Sigma^{(2)}_m$-symmetrical matrix, and 
$$
l_i(x) = l_i(x_1, \ldots, x_k)
$$
are some linear forms that we have to find. These forms would give a map $\mathbb R^k\to \mathbb R^m$ such that its image $V$ is the required subspace, because the form $\sum a_{ij} l_i^2l_j^2$ becomes a square after restriction to $V$. Note that these forms have to span $(\mathbb R^k)^*$ to give a map with zero kernel.

We are going to find the forms $l_i(x)$ using the following procedure. Let $s$ be the least power of two that is greater or equal to $k$, let $m = m's$. Choose $s$ linear forms $\lambda_1(x),\ldots, \lambda_s(x)$, with the only restriction that
\begin{equation}
\label{sq-cond}
x_1^2+\ldots+x_k^2 \sim \lambda_1(x)^2+\ldots + \lambda_s(x)^2.
\end{equation}

In this case these $s$ forms already span $(\mathbb R^k)^*$. Let us partition all the forms $l_i$ ($i=1, \ldots, m's$) into consecutive $s$-tuples, and let the $s$-tuple number $t=1,\ldots, m'$ be obtained from the $(\lambda_1, \ldots, \lambda_s)$ by a transform $\sigma_t\in SO(k)\times \mathbb R^+$ (a rotation with a positive homothety), i.e.
$$
l_{st-i}(x) = \lambda_{s-i}(\sigma_tx),
$$
where $i=0,\ldots, s-1$. Every $\sigma_t$ multiplies the quadratic form $x_1^2+\ldots + x_k^2$ by a positive number, hence Equation~\ref{sq-cond} holds for every considered $s$-tuple of $l_i$'s.

Note that the right hand part of Equation~\ref{solution} can be rewritten using Equation~\ref{sq-cond} (and the symmetry of $a_{ij}$) as follows
$$
(x_1^2+x_2^2+\ldots+x_k^2)^2 \sim \left(\sum_{t=1}^{m'}\sum_{1\le i,j\le s} a_{ij} \lambda_i(\sigma_tx)^2\lambda_j(\sigma_tx)^2\right) + B(x_1^2+\ldots + x_k^2)^2.
$$
The first summand is formed by $s\times s$ cells on the diagonal of $a_{ij}$. Each of the non-diagonal $s\times s$ cells of $a_{ij}$ consists of a single constant (from the $\Sigma^{(2)}_m$-symmetry condition). Hence, the non-diagonal $s\times s$ cells give a summand proportional to $(x_1^2+\ldots + x_k^2)^2$ (from Equation~\ref{sq-cond}).

Now denote
$$
g(x) = \sum_{1\le i,j\le s} a_{ij} \lambda_i(x)^2\lambda_j(x)^2,
$$
we have to prove that for some $\sigma_1,\ldots, \sigma_{m'}\in SO(k)\times \mathbb R^+$
\begin{equation}
\label{average}
(x_1^2+x_2^2+\ldots+x_k^2)^2 \sim \sum_{t=1}^{m'} g(\sigma_tx).
\end{equation}
The rest of the proof is similar to the proof in~\cite{mil1988}. If we substitute the right hand part of Equation~\ref{average} by an integral over every possible rotation $\rho\in SO(k)$, we surely obtain an $SO(k)$-invariant $4$-form, which has to be proportional to $(x_1^2+\ldots+x_k^2)^2$. Then we use the Carath\'eodory theorem to show that if 
$$
m'\ge \binom{k+3}{4},
$$
then some $m'$ rotations give a symmetric convex combination 
$$
\sum_{t=1}^{m'} w_tg(\rho_tx) \sim (x_1^2+x_2^2+\ldots+x_k^2)^2,
$$
now it suffices to denote $\sigma_t = \rho_t\sqrt[4]{w_t\phantom{|}}$ and use the fact that $g(x)$ is $4$-homogeneous.

\begin{rem} The total estimate on $n(4, k)$ in this proof is not very good. The averaging argument gives $m\sim k^5$, after that $n$ is determined by $m$ using the Borsuk--Ulam property. The latter estimate is not known directly, because it uses asymptotic facts on the equivariant cohomotopy of classifying spaces. From a detailed analysis of the proof it is clear that the group $\Sigma^{(2)}_m$ can be replaced by a smaller subgroup, but anyway, the group depends on $k$ and the estimate on $n(m)$ is not known. See also the discussion at the end of Ch.~3 in~\cite{bart1993}, the results conjectured there would imply a polynomial bound for $n(m)$.
\end{rem}

\section{Proof of Theorem~\ref{ramsey-pol} for arbitrary $d$}
\label{proof-final} 

First, let us apply the Borsuk--Ulam theorem for $2$-groups to the group $G=(\mathbb Z_2)^m\rtimes \Sigma^{(2)}_m$, $m$ is to be defined later. If the initial dimension $n$ is large enough, then the restriction of $f$ to some $m$-dimensional subspace equals (put $d/2 = \delta$)
\begin{equation}
\label{2gr-symm}
f = \sum_{1\le i_1,i_2,\ldots, i_\delta\le m} a_{i_1,\ldots,i_\delta} y_{i_1}^2y_{i_2}^2\dots y_{i_\delta}^2,
\end{equation}
where the numbers $a_{i_1,\ldots,i_\delta}$ are invariant under the component-wise action of $\Sigma^{(2)}_m$ on the indexes $i_1,\ldots, i_\delta$. 

We are going to use the averaging argument from~\cite{mil1988} several times and for several polynomials simultaneously, so we describe the averaging procedure in a single lemma. Denote the group of rotations composed with a homothety by $S(k) = SO(k)\times \mathbb R^+$, call its elements \emph{similarity transforms}. Sometimes we consider the zero map as a similarity transform too, denote $S_0(k)= S(k)\cup\{0\}$.

\begin{lem}
\label{avg-pol}
Suppose $f_1, \ldots, f_l$ are even homogeneous polynomials of degree $\le d$ in $k$ variables,
$$
n\ge l \binom{k+d-1}{d}.
$$
We can find $n$ similarity transforms $\sigma_1,\ldots,\sigma_\in S_0(k)$ (not all zero) such that all the polynomials 
$$
\overline f_j(x) = \sum_{i=1}^n f_j(\sigma_i x)
$$
are proportional to $Q^{\deg f_j/2} = (x_1^2+\ldots+x_k^2)^{\deg f_j/2}$.
\end{lem}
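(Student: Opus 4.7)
The plan combines the $SO(k)$-averaging trick of~\cite{mil1988} with Carath\'eodory's theorem applied in a quotient of polynomial spaces, generalizing the argument used for $d=4$ in Section~\ref{d4-proof-geom}. For each $j$, the integral $\int_{SO(k)} f_j(\rho x)\,d\rho$ is an $SO(k)$-invariant homogeneous polynomial of degree $\deg f_j=2\delta_j$ on $\mathbb R^k$ and hence a scalar multiple of $Q^{\delta_j}$. Writing $H_{2\delta}$ for the space of homogeneous polynomials of degree $2\delta$ on $\mathbb R^k$ (so that $\dim H_{2\delta}=\binom{k+2\delta-1}{2\delta}\le\binom{k+d-1}{d}$ whenever $2\delta\le d$), introduce
\[
V:=\bigoplus_{j=1}^{l}H_{2\delta_j},\qquad U:=\bigoplus_{j=1}^{l}\mathbb R\,Q^{\delta_j}\subset V,\qquad \pi\colon V\to V/U,
\]
together with the continuous map $\Phi\colon SO(k)\to V$ given by $\Phi(\rho):=(f_j\circ\rho)_j$. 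The averaging observation then reads $\int_{SO(k)}\pi\Phi(\rho)\,d\rho=0$ in $V/U$.

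Applying Carath\'eodory's theorem to the convex hull of the compact set $\pi\Phi(SO(k))\subset V/U$, and using $\dim(V/U)\le l\bigl(\binom{k+d-1}{d}-1\bigr)$, I extract rotations $\rho_1,\dots,\rho_N\in SO(k)$ and nonnegative weights $w_1,\dots,w_N$, not all zero, with
\[
N\le l\binom{k+d-1}{d}-l+1\le l\binom{k+d-1}{d},\qquad \sum_{i=1}^{N}w_if_j(\rho_ix)\in\mathbb R\,Q^{\delta_j}\text{ for every }j.
\]

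It remains to absorb the weights into scalings. When all $f_j$ share the common degree $d$, set $\sigma_i:=w_i^{1/d}\rho_i\in S(k)$ if $w_i>0$ and $\sigma_i:=0$ otherwise; then $f_j(\sigma_ix)=w_if_j(\rho_ix)$ for every $j$, so $\overline f_j=\sum_i f_j(\sigma_i\,\cdot\,)\in\mathbb R\,Q^{d/2}$, and padding with additional zero transforms yields exactly $n$ terms. The mixed-degree case reduces to this one by first multiplying each $f_j$ by $Q^{d/2-\delta_j}$ to lift it to degree $d$, applying the equal-degree argument to the lifted family $\widetilde f_j:=Q^{d/2-\delta_j}f_j$, and then dividing each summand of the resulting identity by the common nonzero factor $Q^{d/2-\delta_j}$.

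The main obstacle lies precisely in this last step. A single scaling $t>0$ acts on $f_j$ by the $j$-dependent exponent $t^{2\delta_j}$, so the weights produced by Carath\'eodory cannot be absorbed into scalings uniformly across polynomials of different degrees. The lift-and-unwind therefore has to be verified carefully, checking that the similarity transforms obtained for the lifted $\widetilde f_j$'s really deliver the required invariance for the original $f_j$'s of lower degree; the freedom provided by the zero element of $S_0(k)$, together with the fact that the bound $l\binom{k+d-1}{d}$ is already larger than the naive Carath\'eodory count $l(\binom{k+d-1}{d}-1)+1$, should be what closes this gap, possibly after splitting the transforms into blocks indexed by the distinct occurring degrees.
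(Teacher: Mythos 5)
Your approach is the same as the paper's: average $f_j$ over $SO(k)$ to land in $\mathbb R\,Q^{\delta_j}$, then use Carath\'eodory to replace the Haar integral by a finite positive combination, and finally try to absorb the Carath\'eodory weights $w_i$ into the similarity transforms via $\sigma_i=w_i^{1/d}\rho_i$. Passing to the quotient $V/U$ before applying Carath\'eodory is a small sharpening of the dimension count but does not change the substance.

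The gap you flag is genuine, and it is present verbatim in the paper's own proof, which dismisses the mixed-degree case in a single sentence (``multiply by $Q^{(d-d')/2}$ and assume all $f_j$ have the same degree $d$''). The reduction does not hold as stated: if $\tilde f_j=Q^{(d-d_j)/2}f_j$ and $\sigma_i=w_i^{1/d}\rho_i$, then $\tilde f_j(\sigma_i x)=w_i^{(d-d_j)/d}Q(x)^{(d-d_j)/2}\,f_j(\sigma_i x)$, so $\sum_i\tilde f_j(\sigma_i x)\in\mathbb R\,Q^{d/2}$ only gives $\sum_i w_i^{(d-d_j)/d}f_j(\sigma_i x)\in\mathbb R\,Q^{d_j/2}$, a weighted sum with $j$-dependent weights rather than the required plain sum $\sum_i f_j(\sigma_i x)$. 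A scaling $t_i$ acts on $f_j$ by $t_i^{d_j}$, so a single $t_i$ cannot reproduce the single scalar $w_i$ across several values of $d_j$ simultaneously.

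The cure you gesture at (``splitting the transforms into blocks indexed by the distinct occurring degrees'') does close the gap, but via composition rather than concatenation, and at the price of a worse bound: group the $f_j$ by degree $d_1<\dots<d_r$, first find $\{\sigma^{(1)}_a\}$ handling the degree-$d_1$ polynomials (equal-degree Carath\'eodory), then replace each higher-degree $f_j$ by $g_j:=\sum_a f_j(\sigma^{(1)}_a\,\cdot\,)$, find $\{\sigma^{(2)}_b\}$ handling the degree-$d_2$ ones, and so on; the final family is $\{\sigma^{(1)}_a\sigma^{(2)}_b\cdots\}$, and any already-handled $f_j$ stays proportional to $Q^{\delta_j}$ after composition because $Q(\sigma\cdot)\propto Q(\cdot)$ for every similarity $\sigma$. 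This gives a bound that is a product of the per-degree Carath\'eodory counts rather than $l\binom{k+d-1}{d}$, so the constant in the lemma as printed is not recovered by this method; for the paper's main theorem this is harmless since the topological step already has no explicit bound, but you should state the corrected estimate rather than leave the mismatch unresolved.
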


\begin{proof}
Note that if a polynomial $f_j(x)$ has degree $d'<d$, we can multiply it by $Q^{\frac{d-d'}{2}}$ and assume that all $f_j(x)$ has the same degree $d$.

As in~\cite{mil1988}, the polynomials 
$$
\overline f_j(x) = \int_{\rho\in SO(k)} f_j(\rho x) d\rho
$$
are $SO(k)$-invariant, and therefore proportional to $Q^{d/2}$. Note that the linear space of $l$-tuples of polynomials of degree $d$ in $k$ variables has dimension $l \binom{k+d-1}{d}$, and by the Carath\'eodory theorem the vector $(\overline f_1,\ldots, \overline f_l)$ is proportional to a convex combination 
$$
(\overline f_1(x),\ldots, \overline f_l(x)) = \sum_{i=1}^n w_i (f_1(\rho_i x),\ldots, \overline f_l(\rho_i x))
$$
for some $\rho_1,\ldots,\rho_n\in SO(k)$. Putting $\sigma_i = w_i^{1/d}\rho_i\in S_0(k)$, we obtain the required formula.
\end{proof}

\begin{rem}
Note that the averaging procedure is linear in the polynomials $f_j$. We can take $l=\binom{k+d-1}{d}$ and average \emph{all} the even polynomials of degree $\le d$ by the same sequence $\sigma_1,\ldots,\sigma_n\in S_0(k)$ for $n=\binom{k+d-1}{d}^2$.
\end{rem}

Let us describe the structure of a $G$-invariant polynomial $f$ in Equation~\ref{2gr-symm} in more detail. Denote $H=\Sigma^{(2)}_m$ for brevity. Similar to what is done in Section~\ref{sylow-sec}, let us describe the $H$-orbits of the multisets $(i_1,\ldots, i_\delta)$. The coefficients $a_{i_1,\ldots, i_\delta}$ of $f$, corresponding to the same orbit, should be equal. Let us identify the set $[m]$ with the leaves of the full binary tree $T$ of height $h=\log_2 m$. Again, the group $H$ is the group of automorphisms of $T$ preserving the grading. We choose the grading so that the leaves are of grading zero, what is above them is of grading $1$, and so on.

For every multiset $S=(i_1,\ldots, i_\delta)$ consider the subtree $T_S$ spanning $S$ and the root of $T$. The orbit of $S$ is fully characterized by the corresponding orbit of $T_S$ with assigned multiplicities (of the multiset $S$) to the leaves of $T_S$. As in Section~\ref{sylow-sec} the orbit of $T_S$ is fully described by gradings of its forks, the parent-child relation between them, and the multiplicities of leaves. In this description the only value that depends on $h$ is the gradings, therefore the number of $H$-orbits of trees $T_S$ with multiplicities on leaves (and the number of $H$-orbits of index multisets $S$) is
$$
\le C(\delta) h^{\delta-1}.
$$

In the sequel we will use the above tree description for different heights $h$, so denote $H_h=\Sigma^{(2)}_{2^h}$ the $2$-Sylow permutation group, and the corresponding full binary tree $T_h$. Suppose $U$ is a multiset of cardinality $\delta'\le \delta$ in $[2^h]$ (the leaves of $T_h$), denote by
$$
g_U(y_1,\ldots, y_{2^h}) = \sum_{\substack{\sigma\in H_h\\\sigma(U) = (i_1,\ldots,i_{\delta'})}} y_{i_1}^2\dots y_{i_{\delta'}}^2
$$
the $H_h$-invariant polynomials. The number of such distinct polynomials $g_U$ is $\le C(\delta) h^\delta$, and the considered polynomial $f$ is a linear combination of such polynomials in $m$ variables for $\#U=\delta$ (we denote $\#U$ the cardinality of the multiset $U$).

We are going to find $m$ linear forms $l_t(x)$ on $\mathbb R^k$ such that every polynomial $g_U(l_1(x),\ldots,l_m(x))$ (after substituting $y_t=l_t(x)$) is proportional to $Q^\delta$, this will imply the same claim about $f(l_1(x),\ldots,l_m(x))$ by linearity.

Take some nonzero linear function $l_1(x_1,\ldots, x_k)$ in $k$ variables. Then build the other forms $l_i(x)$ by the following procedure. Define the sequence of the powers of two $s_0 = 1, s_1=2^{h_1},\ldots, s_\delta = 2^{h_\delta}$ that satisfies the following inequality
$$
s_{i+1} \ge C(\delta) {h_i}^\delta \binom{k+d-1}{d} s_i.
$$
Then suppose we have already chosen the linear functions $l_1(x),\ldots,l_{s_i}(x)$. Consider all the polynomials $g_U(y_1,\ldots, y_{s_i})$, corresponding to multisets $U$ in $[s_i]$ of cardinality $\le \delta$, the number of distinct such polynomials is at most $C(\delta) {h_i}^\delta$. Put 
$$
\phi_U(x) = g_U(l_1(x), \ldots, l_{s_i}(x))
$$
and apply Lemma~\ref{avg-pol} to the polynomials $\phi_U(x)$ to obtain $n\le C(\delta) {h_i}^\delta\binom{k+d-1}{d}$ similarity transforms $\sigma_1,\ldots,\sigma_n\in S_0(k)$ (not all zero) such that all the expressions
$$
\sum_{j=1}^n g_U(l_1(\sigma_j x), \ldots, l_{s_i}(\sigma_j x))
$$
are proportional to $Q^{\#U}$. Denote for $t = s_i (j-1) + r$ ($1\le j\le n$, $1\le r \le s_i$)
$$
l_t(x) = l_r(\sigma_j x).
$$
Now we have $s_{i+1} = ns_i$ linear functions, consisting of $n$ similar copies of the previous set of $s_i$ linear functions.

Finally we define $m = s_\delta$, note that here $m$ is roughly of order $C(d) (k\log k)^{d^2/2}$. We could also take $m \le 2k\binom{k+d-1}{d}^d$ using the remark after Lemma~\ref{avg-pol}, this bound is worse but does not contain unknown functions of $d$. Note again that the explicit bound in this theorem depends on the (unknown) explicit bound on $n$ in terms of $m$ and $k$ in the Borsuk--Ulam theorem for $2$-groups.

Consider a polynomial $g_S$, corresponding to a multiset $S$ in $[m]$ of cardinality $\delta$. Denote for brevity for a multiset $S=(i_1,\ldots,i_\delta)$
$$
\overline y^{2S} = (y_1,\ldots, y_m)^{2S} =y_{i_1}^2y_{i_2}^2\dots y_{i_\delta}^2.
$$
The corresponding tree $T_S$ has no forks with gradings in $(h_i,h_{i+1}]$ for some $i$ by the pigeonhole principle (it has $\le \delta-1$ forks). If we fix the part $T_0$ of this tree with gradings $>h_{i+1}$, and cut it off, then we obtain several subtrees $T_1,\ldots, T_r$ of height $h_{i+1}$, with forks no higher than $h_i$, denote their corresponding leaf multisets $S_1,\ldots, S_r$. If we decompose $[m]$ into segments $I_1,\ldots, I_{m/s_{i+1}}$ of length $s_{i+1}$ each, then we see that the leaf multisets $S_j$ are intersections of $S$ with the corresponding segments $I_j$. Consider the orbit of $S$ under the group 
$$
F_1\times \dots\times F_{m/s_{i+1}} \subseteq H_h,
$$
where $F_j=\Sigma^{(2)}_{s_{i+1}}$ is the group of $2$-Sylow permutations in every $I_j$. Denote $r=m/s_{i+1}$ The sum of the corresponding monomials
$$
\sum_{\gamma_1\times\dots\times\gamma_r\in F_1\times \dots\times F_r} \overline y^{2\gamma_1\times\dots\times\gamma_r(S)}
$$
can be rewritten as the product
$$
\prod_{\substack{1\le j\le r\\ S\cap I_j\neq\emptyset}} \sum_{\gamma_j\in F_j} \overline y^{2\gamma_j(S\cap I_j)}.
$$
Every expression $\sum_{\gamma_j\in F_j} \overline y^{2\gamma_j(S_j)}$ (we denote $S_j=I_j\cap S$), corresponding to a particular $I_j$, is a homogeneous polynomial of the form $g_{S_j}$ in $s_{i+1}$ variables $y_t$ ($t\in I_j$). Consider the subgroup $G_j\in F_j$, consisting of elements of the form $\alpha\times\dots\times\alpha$, where $\alpha\in \Sigma^{(2)}_{s_i}$ is a permutation of size $[s_i]$, i.e. $G_j$ permutes all the $s_i$-blocks of $I_j$ in the same way. Consider also the subgroup $K\subset F_j$ (isomorhic to $(\mathbb Z_2)^{h_{i+1}/h_i}$) that permutes the whole $s_i$-blocks in $I_j$ transitively, this is the group, generated by applying the same transposition of children at a particular binary tree level. Here we assume that any group $(\mathbb Z)^\alpha$ permutes a set of cardinality $2^\alpha$ by an action isomorphic to the left action of $(\mathbb Z)^\alpha$ on itself. Note that these two groups generate a Cartesian product subgroup $G_j\times K\subseteq F_j$.

Note that the corresponding to $S_j$ tree $T_j$ has no forks higher than $h_i$. The sum 
\begin{equation}
\label{sum-sum}
\sum_{\gamma\times \kappa\in G_j\times K} \overline y^{2\gamma\times \kappa(S_j)}
\end{equation}
can be rewritten as summation over $\gamma\in G_j$, and then on $\kappa\in K$. The first summation gives a polynomial of type $g_{S_j}$ in $s_i$ variables $y_t$, corresponding to the $s_i$-block of $I_j$, where all elements $S_j$ are contained (because $T_j$ has no forks higher than $h_i$ and $S_j$ is contained in a single $s_i$-block). 

If we substitute $y_t = l_t(x)$ and sum such polynomials $g_{S_j}$ over $K$, we obtain an expression in $x_1,\ldots, x_k$ proportional to $Q^{\#S_j}$ by the construction of the linear functions $l_t(x)$. Suppose $I_j$ is the segment $I_1$ of the first $s_{i+1}$ variables $y_t$, the first $s_i$ functions $l_t(x)$ of this segment were transformed into $s_{i+1}$ functions $l_t(\sigma_u(x))$ by the construction, so that the summation over $u=1,\ldots, s_{i+1}/s_i$ of the expressions  
$$
g_{S_j}(l_1(\sigma_u x),\ldots, l_{s_i}(\sigma_u x))
$$ 
makes them proportional to $Q^{\#S_j}$. The same holds for every (not only the first) $s_{i+1}$-segment $I_j$, because all the corresponding linear functions $\{l_t(x) : t\in I_j\}$ are obtained from the linear functions $\{l_t(x) : t\in I_1\}$ by substituting $l_t(\tau x)$ with the same similarity transform $\tau$, which appears in the construction of $s_{i+2},\ldots, s_\delta$. In this case we make the summation of  
$$
g_{S_j}(l_1(\sigma_u\tau x),\ldots, l_{s_i}(\sigma_u\tau x))
$$ 
over $u=1,\ldots, s_{i+1}/s_i$ (i.e. over the group $K$), and by the construction we obtain a polynomial proportional to $Q(\tau x)^{\#S_j}$, which is proportional to $Q(x)^{\#S_j}$, since $\tau$ is a similarity transform.

If we pass to summation in Equation~\ref{sum-sum} over a larger group $F_j\supseteq G_j\times K$, then we again obtain a sum similar to $Q^{\#S_j}$ after substituting $y_t = l_t(x)$. The same argument is valid for summation of the monomials $\overline y^{2S}$ over the entire group $H_h\supseteq F_1\times \dots\times F_r$, so every such sum will be proportional to $Q^\delta$ after substituting $y_t=l_t(x)$, as required.

Note that the case $i+1=1$ is done in the same manner assuming $s_0=1$.

\begin{rem}
Note that if we use the ``universal averaging'', according to the remark after Lemma~\ref{avg-pol}, then the only needed averaging in the proofs is averaging over the groups $K_i$ (denoted simply $K$ in the proof). These groups are $2$-tori of size $2^{h_{i+1}/h_i}$, and the total required group is the wreath product $K_1\wr K_2\wr\dots\wr K_\delta$. This observation may help if some explicit bounds in the Borsuk--Ulam theorem for wreath products of $2$-tori are found.
\end{rem}

\section{Sections and projections of convex bodies}
\label{proj-sec}

Now we return to the case of quadratic forms, and deduce some corollaries for sections or projections of convex bodies from Theorems~\ref{grass-quad} and \ref{grass-quad-mult}. We use the standard approach (e.g. see~\cite{ziv2004}) of taking geometric consequences of the results over Grassmanians.

\begin{cor}
Suppose $k$ is an integer of the form $2p^\alpha$, $m\ge 1$, $n\ge n(2, k, m)$ from Theorem~\ref{grass-quad-mult} or \ref{grass-quad}. Let $K_1,\ldots, K_m$ be convex bodies in $\mathbb R^n$. Then there exists a $k$-dimensional linear subspace $L\subseteq \mathbb R^n$ such that the orthogonal projections of any $K_i$ onto $L$ has a Euclidean ball as its John ellipsoid.
\end{cor}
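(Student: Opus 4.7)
The plan is to convert each convex body $K_i$ into a continuous section of $\Sigma^2(\gamma_n^k)$ and apply Theorem~\ref{grass-quad-mult} with $d=2$.

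Concretely, assuming each $K_i$ has nonempty interior (the usual definition of a convex body), for every $L \in G_n^k$ the orthogonal projection $\pi_L(K_i)$ is a convex body in $L$ with nonempty interior, hence admits a unique John ellipsoid. This ellipsoid has the form $\{v \in L : q_i(L)(v) \le 1\}$ for a uniquely determined positive-definite quadratic form $q_i(L)$ on $L$. Identifying quadratic forms on the fiber $L$ with elements of $\Sigma^2(L)$ via the ambient Euclidean structure (the same identification that turns $Q(\gamma_n^k)$ into the standard form $x_1^2+\dots+x_k^2$ on each fiber), the assignment $L \mapsto q_i(L)$ defines a section $s_i$ of $\Sigma^2(\gamma_n^k)$ over $G_n^k$.

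Next I would verify continuity of $s_i$: the projection $\pi_L(K_i)$ depends continuously on $L$ in the Hausdorff metric, and the John ellipsoid depends continuously on the convex body (a standard fact, following e.g. from uniqueness of the maximum-volume inscribed ellipsoid and a compactness argument on the space of positive-definite forms). Then I would apply Theorem~\ref{grass-quad-mult} to the tuple $(s_1,\ldots,s_m)$; since $n\ge n(2,k,m)$, this yields some $L \in G_n^k$ on which every $s_i|_L$ equals $c_i\, Q(\gamma_n^k)|_L$ for a scalar $c_i$. Because each $q_i(L)$ is positive definite (in particular nonzero), we must have $c_i > 0$, so the John ellipsoid of $\pi_L(K_i)$ is precisely the Euclidean ball $\{v \in L : \|v\|^2 \le 1/c_i\}$, as required.

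The whole geometric content of the corollary is packed into Theorem~\ref{grass-quad-mult}; the only steps outside of that black box are the continuity check and the identification between positive-definite quadratic forms on $L$ and elements of the fiber $\Sigma^2(L)$. Both are routine, so I do not expect any substantive obstacle. An analogous argument would also handle sections $K_i \cap L$ in place of projections, using that the section of a convex body by a subspace is again a convex body depending continuously on the subspace (restricted to the open set of $L$'s meeting the interior of $K_i$, which is sufficient since one can always translate $K_i$ to contain the origin in its interior).
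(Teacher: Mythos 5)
Your proof is correct and follows exactly the same route as the paper: encode the John ellipsoid of $\pi_L(K_i)$ as a continuous section of $\Sigma^2(\gamma_n^k)$ and apply Theorem~\ref{grass-quad-mult}. The extra details you supply (positivity of $c_i$, the continuity discussion) are sound but do not change the argument.
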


\begin{proof}
Consider all possible choices of $L$, they form the Grassmannian $G_n^k$. The John ellipsoid~\cite{john1948} of the projection $\pi_L(K_i)$ depends continuously on $L$, its homogeneous component of degree $2$ is a quadratic form on $L$, hence it gives a section $s_i$ of $\Sigma^2(\gamma_n^k)$. By Theorem~\ref{grass-quad-mult} these quadratic forms are simultaneously proportional to the standard quadratic form over some $L$.
\end{proof}

The following corollary is proved in the same way.

\begin{cor}
Suppose $k$ is an integer of the form $2p^\alpha$, $m\ge 1$, $n\ge n(2, k, m)$ from Theorem~\ref{grass-quad-mult} or \ref{grass-quad}. Let $K_1,\ldots, K_m$ be convex bodies in $\mathbb R^n$, and $x$ be a point in the interior of $\bigcap_{i=1}^m K_i$. Then there exists a $k$-dimensional affine subspace $x\in L\subseteq \mathbb R^n$ such that for any $i$ the section $K_i\cap L$ has a Euclidean ball as its John ellipsoid.
\end{cor}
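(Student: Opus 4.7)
The plan is to mirror the proof of the preceding corollary, replacing orthogonal projections by sections through the point $x$. First I would translate $\mathbb R^n$ so that $x$ becomes the origin; then a $k$-dimensional affine subspace through $x$ is the same thing as a linear $k$-subspace $L\in G_n^k$, and the Grassmannian parametrizes all choices.

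Next, for each $L\in G_n^k$ and each index $i$, the section $K_i\cap L$ is a convex body containing the origin in its interior — this is where the hypothesis $x\in\inte\bigcap_{i=1}^m K_i$ is used — so its John ellipsoid $E_i(L)\subset L$ is well defined. Writing $E_i(L)$ as the unit sublevel set of the degree-$2$ polynomial $(y-c_i(L))^TA_i(L)(y-c_i(L))$ on $L$, the quadratic form $A_i(L)$ (the homogeneous component of degree $2$) records precisely the shape of $E_i(L)$, and $E_i(L)$ is a Euclidean ball if and only if $A_i(L)$ is a positive multiple of the restriction of $Q=x_1^2+\dots+x_n^2$ to $L$. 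The assignment $L\mapsto A_i(L)$ is a continuous section $s_i$ of $\Sigma^2(\gamma_n^k)$, using the Euclidean metric on the fibers of $\gamma_n^k$ to identify $\Sigma^2((\gamma_n^k)^*)$ with $\Sigma^2(\gamma_n^k)$.

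Then I would apply Theorem~\ref{grass-quad-mult} with $d=2$ and the given $k,m,n$ to the $m$ sections $s_1,\ldots,s_m$. This furnishes an $L\in G_n^k$ at which each $s_i(L)$ is simultaneously proportional to $Q(\gamma_n^k)|_L$, which is exactly the assertion that every $E_i(L)$ is a round ball (with, in general, different radii and possibly distinct John centers in $L$). Translating back so that the origin returns to $x$, the subspace $L$ becomes the desired affine $k$-subspace through $x$.

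The only point that warrants checking is the continuity, in $L$, of the sections $L\mapsto K_i\cap L$ and of the resulting John ellipsoids; in the preceding corollary this is immediate from continuity of orthogonal projection, whereas here it is the standard fact that intersections of a fixed convex body with a continuously varying $k$-plane are continuous in the Hausdorff metric provided the plane meets the interior, and the John ellipsoid is in turn a continuous functional on convex bodies with nonempty interior. This is the main (and entirely routine) obstacle; once it is observed, the topological content lies wholly in the already-proved Theorem~\ref{grass-quad-mult}.
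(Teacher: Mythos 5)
Your proof is correct and follows exactly the paper's intended route: the paper states this corollary is proved "in the same way" as the preceding one (translate $x$ to the origin so that affine $k$-planes through $x$ are parametrized by $G_n^k$, extract from the John ellipsoid of $K_i\cap L$ the degree-$2$ homogeneous part as a continuous section of $\Sigma^2(\gamma_n^k)$, and apply Theorem~\ref{grass-quad-mult}). Your additional remarks on the continuity of $L\mapsto K_i\cap L$ and of the John ellipsoid are a reasonable elaboration of the same argument, not a different approach.
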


It is easy to see that instead of the John ellipsoid we can consider the second moment matrix of the projection (or the section), or some other quadratic form, depending continuously on the convex body. Note that some ``approximate'' version of these theorems follows form the original Dvoretzky theorem, e.g. we can state that the John ellipsoid is $\varepsilon$-close to a ball. 

\section{The weak form of the Knaster conjecture}

Let us state the weak form of the Knaster conjecture from~\cite{kna1947}.

\begin{con}
There exists $n=n(l)$ such that for any $l$ points $X=\{x_1,\ldots,x_l\}$ on the unit sphere $S^{n-1}$ and any continuous function $f : S^{n-1}\to \mathbb R$ there exists a rotation $\rho\in O(n)$ such that
$$
f(\rho x_1) = f(\rho x_2) = \dots = f(\rho x_l).
$$
\end{con}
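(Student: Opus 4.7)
The plan is a two-part reduction: first establish a polynomial version of the conjecture using Theorems~\ref{ramsey-pol} and~\ref{grass-pol-odd}, and then pass to continuous $f$ by uniform approximation.

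\emph{Polynomial version.} Given a polynomial $p$ on $\mathbb R^n$ of degree at most $D$, decompose $p = p_0 + p_1 + \dots + p_D$ into homogeneous components. I would construct, by iteration over the components, a decreasing chain of linear subspaces $\mathbb R^n = V_0 \supseteq V_1 \supseteq \dots \supseteq V_D$. At the $d$-th step one works inside $V_{d-1}$ and applies Theorem~\ref{ramsey-pol} to $p_d|_{V_{d-1}}$ when $d$ is even (obtaining a subspace on which $p_d$ is proportional to $Q^{d/2}$) or Theorem~\ref{grass-pol-odd} to $p_d|_{V_{d-1}}$ when $d$ is odd (obtaining a subspace on which $p_d$ vanishes identically). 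Both ``proportional to $Q^{d/2}$'' and ``identically zero'' are preserved under further linear restriction, so after $D$ iterations the final subspace $V := V_D$ has the property that $p|_V(x) = c_0 + \sum_{d\text{ even}, d \geq 2} c_d |x|^d$, and in particular $p|_V$ is constant on $V \cap S^{n-1}$. Provided $\dim V \geq l$, any set $X$ of $l$ points on $S^{n-1}$ can be rotated into $V$ by some $\rho \in O(n)$, which yields $p(\rho x_1) = \dots = p(\rho x_l)$. The ambient dimension required grows with $D$; call the resulting bound $n(l, D)$.

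\emph{Continuous version.} For continuous $f$, approximate uniformly on $S^{n-1}$ by a polynomial $p_k$ with $\|f - p_k\|_\infty < 1/k$ via Stone--Weierstrass. Apply the polynomial version to each $p_k$ to produce $\rho_k \in O(n)$ with $p_k(\rho_k x_i)$ independent of $i$. By compactness of $O(n)$ extract a convergent subsequence $\rho_{k_j} \to \rho$; since $\|f - p_{k_j}\|_\infty \to 0$ and $f$ is continuous, passing to the limit gives $f(\rho x_i) = f(\rho x_j)$ for all $i, j$.

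\emph{The main obstacle.} The degree $D$ required to approximate a continuous $f$ to a given accuracy depends on $f$ and tends to infinity as the accuracy improves. Consequently the dimension $n(l, D)$ needed in the polynomial version also grows without bound, whereas the weak Knaster conjecture demands $n = n(l)$ \emph{independent of $f$}. The limit argument in the second step requires all $\rho_k$ to lie in the \emph{same} $O(n)$, so the scheme above yields only $n = n(l, f)$. Closing this gap would seem to require either a uniform-in-degree polynomial Knaster statement (for which the iterative subspace construction offers no leverage, since each iteration costs additional dimensions), or a direct topological argument on $O(n)$ exploiting a $p$-toral subgroup acting equivariantly on the configuration space of isometric copies of $X$ in $S^{n-1}$ along the lines of Lemma~\ref{bu-p-toral}. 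Formulating and carrying out such an equivariant argument is the essential difficulty, and is presumably why the weak Knaster conjecture remains open even in its present form.
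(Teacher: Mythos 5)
The statement you were given is a conjecture, not a theorem: the paper does not prove it, and the surrounding text explicitly notes that even $n(4)$ is not known to be finite. You recognized this, and your diagnosis of the obstruction is exactly right. The polynomial reduction you outline is sound as far as it goes: iterating Theorem~\ref{ramsey-pol} on the even homogeneous components and Theorem~\ref{grass-pol-odd} on the odd ones does produce, for any polynomial $p$ of degree $\le D$ and any target $l$, a bound $n(l,D)$ and a subspace $V$ with $\dim V\ge l$ on which $p$ is constant along $V\cap S^{n-1}$; since $l$ points on the sphere span a subspace of dimension at most $l$, some $\rho\in O(n)$ carries them into $V$, equalizing the values of $p$. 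But the dimension your construction consumes grows with $D$, and the degree needed to uniformly approximate a continuous $f$ on the sphere to accuracy $1/k$ is unbounded as $k\to\infty$, so the compactness argument in $O(n)$ cannot be run with a fixed $n$. This is not a repairable detail but the heart of the matter. It is also worth noting that the paper records the implication running the \emph{other} way: following~\cite{mil1988}, the weak Knaster conjecture would give explicit bounds in Theorem~\ref{ramsey-pol} and in the Dvoretzky theorem. So the polynomial Dvoretzky-type results the paper does prove sit downstream of the Knaster conjecture, not on a route toward it, and no known argument removes the dependence on $f$ that you identified.
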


Originally Knaster conjectured that $n(l)=l$, but counterexamples to his conjecture were found in~\cite{kasha2003}. In~\cite{flo1955} it was proved that $n(3)=3$, but already the value $n(4)$ is not known and not shown to be finite. Known results in this conjecture either consider sets $X$, distributed along a two-dimensional vector subspace of $\mathbb R^n$ (see~\cite{mil1988,mak1990}), or require very specific symmetry conditions, e.g. require $X$ to be an (almost) orthogonal frame (see~\cite{vol1992umn}).

In~\cite{mil1988} it was shown that the original Knaster conjecture ($n(l)=l$) would imply the Dvoretzky theorem with good estimates on $n(k, \epsilon)$, it would also imply Theorem~\ref{ramsey-pol}. The weak form of the Knaster conjecture would also give some bounds in the Dvoretzky theorem, as well as explicit bounds in Theorem~\ref{ramsey-pol}. In order to prove Dvoretzky type results we have to consider sets $X$ distributed densely enough in a sphere $S^{k-1}$ of given dimension.

\end{document}